\title[Kodaira vanishing theorem]{Kodaira vanishing theorem for log-canonical and 
semi-log-canonical pairs}
\author{Osamu Fujino} 
\date{2014/12/25, version 1.02}
\keywords{semi-log-canonical pairs, log-canonical pairs, 
Kodaira vanishing theorem, vanishing theorem of Reid--Fukuda type}
\subjclass[2010]{Primary 14F17; Secondary 14E30}
\address{Department of Mathematics, Graduate School of Science, 
Kyoto University, Kyoto 606-8502, Japan}
\email{fujino@math.kyoto-u.ac.jp}
\newcommand{\Exc}[0]{{\operatorname{Exc}}}
\newcommand{\Supp}[0]{{\operatorname{Supp}}}
\newcommand{\Spec}[0]{{\operatorname{Spec}}}
\newcommand{\Sing}[0]{{\operatorname{Sing}}}
\newtheorem{thm}{Theorem}[section]
\newtheorem{lem}[thm]{Lemma}
\newtheorem{cor}[thm]{Corollary}
\newtheorem{prop}[thm]{Proposition}
\newtheorem*{claim}{Claim}
\theoremstyle{definition}
\newtheorem{defn}[thm]{Definition}
\newtheorem{rem}[thm]{Remark}
\newtheorem*{ack}{Acknowledgments} 
\newtheorem{say}[thm]{}
\begin{document}

\maketitle 

\begin{abstract}
We prove the Kodaira vanishing theorem for 
log-canonical and semi-log-canonical pairs. 
We also give a relative vanishing 
theorem of Reid--Fukuda type for 
semi-log-canonical pairs.  
\end{abstract}

\tableofcontents 

\section{Introduction}\label{sec1}

The main purpose of this short paper is to establish: 

\begin{thm}[Kodaira vanishing theorem for 
semi-log-canonical pairs]\label{thm1.1}
Let $(X, \Delta)$ be a projective semi-log-canonical 
pair and let $L$ be an ample Cartier divisor on $X$. 
Then 
$$
H^i(X, \mathcal O_X(K_X+L))=0
$$ 
for every $i>0$. 
\end{thm}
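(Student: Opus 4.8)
The plan is to reduce the semi-log-canonical case to the normal log-canonical case by passing to the normalization, and then to deduce the log-canonical case from a vanishing theorem on a resolution via an injectivity/duality argument. First I would treat the (irreducible, normal) log-canonical case. Take a log resolution $f\colon Y\to X$ of $(X,\Delta)$ and write $K_Y+\Delta_Y=f^*(K_X+\Delta)+E$ where $\Delta_Y$ is the strict transform plus the reduced exceptional divisor and $E$ is an effective $f$-exceptional divisor with $\lfloor\Delta_Y\rfloor$ having simple normal crossing support. Since $L$ is ample, $f^*L$ is nef and big, so by the Kawamata--Viehweg--type vanishing for simple normal crossing pairs (equivalently, the vanishing theorem of Kollár--Esnault--Viehweg type, or Fujino's vanishing package for embedded normal crossing pairs — all stated in the references the paper builds on) one gets $R^jf_*\mathcal O_Y(K_Y+\lceil\Delta_Y^{<1}\rceil+\lfloor f^*L\rfloor)$ behaves well and $H^i$ of the pushforward vanishes for $i>0$. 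Concretely, I would arrange that $f_*\mathcal O_Y(K_Y+\{\text{the reduced part}\}+\lceil -\Delta_Y^{<1}\rceil)=\mathcal O_X(K_X+L)$ (here one must be slightly careful: for log-canonical pairs $\lfloor\Delta_Y\rfloor$ may be nontrivial, so one actually works with $\mathcal O_Y(K_Y+\lceil\Delta_Y\rceil - \Delta_Y + f^*L)$ or uses the formulation with the boundary rounded appropriately so that its push-forward is $\mathcal O_X(K_X+L)$, using that $E$ is effective exceptional), and then invoke the appropriate vanishing theorem.

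The cleanest route, and the one I expect the paper takes, is to quote the already-established vanishing theorem for embedded simple normal crossing pairs / quasi-log schemes: the pair $(X,\Delta)$ with its lc structure is a quasi-log scheme, $X$ itself being an lc center (the unique one of maximal dimension), so the general vanishing theorem for quasi-log schemes (Ambro--Fujino) applies directly: for a projective quasi-log scheme $[X,\omega]$ and a Cartier divisor $D$ with $D-\omega$ ample (here $\omega=K_X+\Delta$, $D=K_X+\Delta+L$... but we want $K_X+L$, not $K_X+\Delta+L$). So instead I would use the formulation that gives $H^i(X,\mathcal O_X(K_X+L))$: take the resolution, push forward $\mathcal O_Y(K_Y+f^{-1}_*\Delta^{=1}+\lceil -f^{-1}_*\Delta^{<1}-E'\rceil + f^*L)$ where the rounding is chosen so the push-forward is exactly $\mathcal O_X(K_X+L)$ — this works because $K_X+L = K_X+\Delta+L - \Delta$ and one absorbs $-\Delta$ into the rounded-up coefficients on $Y$ — and then the vanishing theorem of Fujino (for projective simple normal crossing pairs, or the $\mathbb{R}$-divisor Kawamata--Viehweg version on $Y$ applied to the nef and big $f^*L$) yields both $R^jf_*(\cdot)=0$ for $j>0$ and $H^i(Y,\cdot)=0$ for $i>0$, hence $H^i(X,\mathcal O_X(K_X+L))=0$ by the Leray spectral sequence.

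For the semi-log-canonical (non-normal, possibly non-equidimensional-looking-but-actually-equidimensional-$S_2$) case, let $\nu\colon \bar X\to X$ be the normalization and let $\bar\Delta$ be the divisor on $\bar X$ defined by $K_{\bar X}+\bar\Delta=\nu^*(K_X+\Delta)$, together with the conductor: then $(\bar X,\bar\Delta)$ is log-canonical (on each component), and there is the standard short exact sequence relating $\mathcal O_X(K_X+L)$, $\nu_*\mathcal O_{\bar X}(K_{\bar X}+\nu^*L)$, and a term supported on the conductor — more precisely the exact sequence $0\to\mathcal O_X(K_X+L)\to\nu_*\mathcal O_{\bar X}(K_{\bar X}+\bar\Delta^{=1}_{\mathrm{cond}}+\nu^*L)\to (\text{a sheaf on the double locus, itself of the same type in lower dimension})\to 0$, which is exactly the mechanism by which slc pairs are built up as quasi-log structures. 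I would then run induction on dimension: the outer two terms have vanishing higher cohomology (the middle by the normal lc case just proved applied componentwise, the quotient by the inductive hypothesis since it is $\mathcal O$ of $K+L$ for an slc pair of smaller dimension — the double normal crossing locus with its different), and the long exact sequence of cohomology forces $H^i(X,\mathcal O_X(K_X+L))=0$ for $i>0$.

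The main obstacle, and where the real content sits, is getting the push-forward/rounding bookkeeping exactly right so that the relevant sheaf on the resolution has push-forward precisely $\mathcal O_X(K_X+L)$ while still being of the form to which the simple-normal-crossing vanishing theorem applies — i.e., handling the reduced boundary $\lfloor\Delta\rfloor$ (lc, not klt) correctly, which is what makes this genuinely a statement about lc rather than klt pairs — and, in the slc case, correctly identifying the conductor/different term in the short exact sequence and checking it is again ``$K+L$ of an slc pair'' so that the induction closes; the projectivity of $X$ and ampleness (hence nef-and-bigness of the pullback) of $L$ are used to kill the higher direct images and the higher cohomology upstairs simultaneously. I do not expect the geometry to be hard; the care is entirely in the divisor-theoretic formulas and in quoting the correct version of the vanishing theorem for (embedded) simple normal crossing pairs established in the works the paper relies on.
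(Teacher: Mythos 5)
Your first half (the normal log-canonical case) is essentially the paper's proof of Theorem \ref{thm1.2}, though you make it harder than it needs to be: no rounding of $\Delta_Y$ enters at all. One simply takes a log resolution $\rho:Z\to X$, sets $E$ to be the \emph{full} reduced exceptional divisor, and checks $\rho_*\mathcal O_Z(K_Z+E)\simeq\mathcal O_X(K_X)$ (Lemma \ref{lem3.1} with $S=0$); log-canonicity is used exactly once, to guarantee that the relative canonical-type divisor is bounded above by a multiple of $E$. Then $L$ is Cartier and comes out of the pushforward by the projection formula, and the Koll\'ar-type vanishing $H^i(X,\mathcal O_X(L)\otimes\rho_*\mathcal O_Z(K_Z+E))=0$ (Theorem \ref{thm3.3}) finishes. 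Your hesitation about ``$K_X+\Delta+L$ versus $K_X+L$'' is resolved in the paper precisely this way: one does not twist by the quasi-log canonical class $K_X+\Delta$; one proves a separate pushforward identity for $\mathcal O_X(K_X)$ itself.

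The genuine gap is in your semi-log-canonical step. Your proposed induction on dimension via the conductor sequence
$0\to\mathcal O_X(K_X+L)\to\nu_*\mathcal O_{\bar X}(K_{\bar X}+\mathcal C_{\bar X}+\nu^*L)\to Q\to 0$
hinges on identifying $Q$ as ``$\mathcal O(K+L)$ of a lower-dimensional slc pair,'' and you explicitly leave this unestablished. It is not routine: the cokernel is cut out by Koll\'ar's residue map together with the involution on the normalized conductor, the double locus with its natural structure need not be demi-normal or $S_2$ without further argument, and matching $Q$ with a dualizing-type sheaf on it is exactly the gluing/different machinery of Koll\'ar's book (the route of Kov\'acs--Schwede--Smith), which the paper deliberately avoids. (A smaller, repairable slip: the vanishing you need for the middle term is $H^i(\bar X,\mathcal O(K_{\bar X}+\mathcal C_{\bar X}+\nu^*L))=0$ with the reduced conductor added, not the bare lc case; this is what Lemma \ref{lem3.1} with $S=\mathcal C_{\bar X}$ plus Theorem \ref{thm3.3} supplies.) The paper's actual mechanism is different and avoids induction entirely: first pass to Koll\'ar's finite double cover, \'etale in codimension one, so that the components of $X$ have no self-intersection in codimension one ($\mathcal O_X(K_X)$ being a direct summand of the pushed-forward canonical sheaf); then construct a \emph{birational} morphism $h:Z\to X$ from a (reducible, non-normal) globally embedded simple normal crossing pair with $h_*\mathcal O_Z(K_Z+E)\simeq\mathcal O_X(K_X)$ for $E=\Exc(h)$ (Proposition \ref{prop3.4}, proved by reducing to Lemma \ref{lem3.1} on the normalizations); and finally apply the vanishing theorem for simple normal crossing pairs (Theorem \ref{thm3.7}, from mixed Hodge theory on cohomology with compact support) in one shot. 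Until you actually prove the residue-sequence identification, your argument does not close.
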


Theorem \ref{thm1.1} is a naive generalization of the Kodaira vanishing 
theorem for semi-log-canonical pairs. As a special case of 
Theorem \ref{thm1.1}, we have: 

\begin{thm}[Kodaira vanishing theorem for 
log-canonical pairs]\label{thm1.2}
Let $(X, \Delta)$ be a projective log-canonical 
pair and let $L$ be an ample Cartier divisor on $X$. 
Then 
$$
H^i(X, \mathcal O_X(K_X+L))=0
$$ 
for every $i>0$. 
\end{thm}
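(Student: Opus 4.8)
The quickest proof is to note that Theorem \ref{thm1.2} is a special case of Theorem \ref{thm1.1}. If $X$ is normal, its normalization is $X$ itself, there is no codimension-one self-intersection locus, and for the pair $(X^{\nu},\Theta)=(X,\Delta)$ the semi-log-canonical condition is exactly the log-canonical condition; hence a projective log-canonical pair is in particular a projective semi-log-canonical pair, and the vanishing $H^i(X,\mathcal O_X(K_X+L))=0$ for $i>0$ is an instance of the one already proved. From this point of view there is nothing further to do, and no obstacle --- all the difficulty is absorbed into Theorem \ref{thm1.1}.

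It is nonetheless worth recording the direct argument, since in the usual development the log-canonical case is established first and the semi-log-canonical case deduced from it (pass to the normalization $\nu\colon X^{\nu}\to X$ and glue the resulting vanishings along the conductor via Mayer--Vietoris). The direct proof goes as follows. Choose a log resolution $f\colon Y\to X$ of $(X,\Delta)$ with $\operatorname{Exc}(f)$ a simple normal crossing divisor, and let $S=\operatorname{Exc}(f)_{\mathrm{red}}$, so that $(Y,S)$ is a simple normal crossing pair. Writing $K_Y+S=f^*(K_X+\Delta)-f_*^{-1}\Delta+\sum_E(a(E,X,\Delta)+1)E$, the log-canonicity of $(X,\Delta)$ --- that is, $a(E,X,\Delta)\ge -1$ for every exceptional $E$ --- makes the exceptional part of the right-hand side effective, and a short computation gives $f_*\mathcal O_Y(K_Y+S)=\mathcal O_X(K_X)$. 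Now invoke the Koll\'ar-type vanishing theorem for simple normal crossing pairs (part of the torsion-freeness/injectivity package that also yields the Reid--Fukuda-type vanishing theorem announced in the abstract): since $L$ is an ample Cartier divisor on $X$, one has $H^i(X,R^qf_*\mathcal O_Y(K_Y+S)\otimes\mathcal O_X(L))=0$ for all $i>0$ and all $q\ge0$, and the $q=0$ summand is precisely $H^i(X,\mathcal O_X(K_X+L))=0$ for $i>0$. (Alternatively, one can derive the theorem from the facts that log-canonical singularities are Du Bois and that projective Du Bois varieties satisfy Kodaira vanishing, using duality on the normal, $S_2$, but possibly non-Cohen--Macaulay variety $X$ and a brief reduction to absorb $\Delta$.)

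The substance of the direct proof --- and the step I expect to be the real obstacle if one insists on a self-contained argument --- is the vanishing theorem for simple normal crossing pairs used at the end; the rest is bookkeeping. It cannot be replaced by Koll\'ar's classical vanishing for $\omega_Y$ or by Kawamata--Viehweg vanishing: when $X$ is log-canonical but not rational, $f_*\omega_Y$ is a proper subsheaf of $\mathcal O_X(K_X)$, the quotient being supported where $X$ fails to have rational singularities, and it is exactly the reduced boundary $S$ that promotes it to all of $\mathcal O_X(K_X)$ --- at which point one has left the reach of Kawamata--Viehweg and needs genuine Hodge-theoretic input on the cohomology of reduced simple normal crossing varieties (Koll\'ar's injectivity theorem and its generalizations). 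This is also the structural reason the ampleness hypothesis is placed on $L$ and not on $L-\Delta$: feeding $\mathcal O_X(K_X+L)$ into a vanishing theorem for log-canonical pairs would require $L-\Delta$ ample, which fails for nonzero effective $\Delta$, so the non-klt part of the boundary must be carried along explicitly. Once that machinery is available --- and supplying it is precisely what the proof of Theorem \ref{thm1.1} does --- Theorem \ref{thm1.2} follows as in the first paragraph.
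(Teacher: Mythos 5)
Your proposal is correct and takes essentially the same approach as the paper, which likewise proves Theorem \ref{thm1.2} in both of the ways you describe: by noting that a log-canonical pair is in particular semi-log-canonical so that Theorem \ref{thm1.1} applies, and by the direct route of a log resolution $\rho\colon Z\to X$ with $E=\Exc(\rho)$, the identity $\rho_*\mathcal O_Z(K_Z+E)\simeq \mathcal O_X(K_X)$ (its Lemma \ref{lem3.1}), and the Koll\'ar-type vanishing $H^i(X,\mathcal O_X(L)\otimes R^j\rho_*\mathcal O_Z(K_Z+E))=0$ (its Theorem \ref{thm3.3}). Your ``short computation'' is precisely the content of Lemma \ref{lem3.1}, and your remarks on why classical Koll\'ar or Kawamata--Viehweg vanishing cannot be substituted agree with the paper's own discussion.
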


Precisely speaking, we prove the following theorem 
in this paper. Theorem \ref{thm1.3} is a relative version of 
Theorem \ref{thm1.1} and obviously contains Theorem \ref{thm1.1} 
as a special case. 

\begin{thm}[Main theorem]\label{thm1.3}
Let $(X, \Delta)$ be a semi-log-canonical 
pair and let $f:X\to Y$ be a projective morphism 
between quasi-projective varieties. 
Let $L$ be an $f$-ample Cartier divisor 
on $X$. 
Then $$
R^if_*\mathcal O_X(K_X+L)=0
$$ for 
every $i>0$. 
\end{thm}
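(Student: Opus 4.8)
The plan is to reduce the semi-log-canonical case to the log-canonical case by passing to the normalization, and then to invoke the relative vanishing theorem for log-canonical pairs.

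First I would let $\nu\colon \bar X\to X$ be the normalization. Since $(X,\Delta)$ is semi-log-canonical, $X$ is demi-normal, so the conductors $C\subset X$ and $\bar C\subset\bar X$ are reduced divisors, and by definition $(\bar X,\bar C+\nu^{-1}_*\Delta)$ is log canonical; in particular $(\bar X,\bar C)$ is log canonical. (The scheme $\bar X$ may be disconnected, but each component is a normal quasi-projective variety and the argument is componentwise.) I would then use the canonical isomorphism $\mathcal O_X(K_X)\cong\nu_*\mathcal O_{\bar X}(K_{\bar X}+\bar C)$ valid for demi-normal $X$. Because $L$ is Cartier, the projection formula gives
\[
\mathcal O_X(K_X+L)\cong\nu_*\bigl(\mathcal O_{\bar X}(K_{\bar X}+\bar C)\otimes\nu^*\mathcal O_X(L)\bigr)\cong\nu_*\mathcal O_{\bar X}(K_{\bar X}+\bar C+\nu^*L),
\]
and since $\nu$ is finite we have $R^q\nu_*=0$ for $q>0$, so
\[
R^if_*\mathcal O_X(K_X+L)\cong R^i(f\circ\nu)_*\mathcal O_{\bar X}(K_{\bar X}+\bar C+\nu^*L)\qquad(i\ge 0).
\]
Writing $g=f\circ\nu\colon\bar X\to Y$, this morphism is projective and $\nu^*L$ is $g$-ample since $\nu$ is finite and $L$ is $f$-ample.

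Next I would set $D=K_{\bar X}+\bar C+\nu^*L$ and $M=D-(K_{\bar X}+\bar C)=\nu^*L$. As $(\bar X,\bar C)$ is log canonical and $M=\nu^*L$ is $g$-ample --- hence $g$-nef and $g$-log big with respect to $(\bar X,\bar C)$, because a relatively ample divisor restricts to a relatively ample (so relatively big) divisor on every log-canonical center --- the Kodaira-type (Reid--Fukuda-type) vanishing theorem for log-canonical pairs gives $R^ig_*\mathcal O_{\bar X}(D)=0$ for $i>0$. Combined with the isomorphism displayed above, this yields $R^if_*\mathcal O_X(K_X+L)=0$ for $i>0$.

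The main obstacle is the vanishing theorem for log-canonical pairs that is being invoked, which in turn rests on Hodge-theoretic vanishing for embedded simple normal crossing pairs. Inside the reduction itself, the two delicate points are: the identification $\mathcal O_X(K_X)\cong\nu_*\mathcal O_{\bar X}(K_{\bar X}+\bar C)$, which genuinely uses demi-normality of $X$; and the verification that the hypotheses of the log-canonical vanishing theorem hold after normalization --- in particular that the full conductor $\bar C$ can be absorbed into the boundary so that $(\bar X,\bar C)$ is log canonical (precisely here the semi-log-canonical assumption is used), and that the cited vanishing applies to the Weil divisor $K_{\bar X}+\bar C+\nu^*L$, which need not be Cartier.
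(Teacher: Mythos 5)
There is a genuine gap, and it is the central point that makes Theorem \ref{thm1.3} nontrivial. The ``canonical isomorphism'' $\mathcal O_X(K_X)\cong\nu_*\mathcal O_{\bar X}(K_{\bar X}+\bar C)$ is false for demi-normal $X$: what holds is only an inclusion $\mathcal O_X(K_X)\subset \nu_*\mathcal O_{\bar X}(K_{\bar X}+\bar C)$, whose cokernel is supported on the conductor and is in general nonzero. (Indeed, the paper's proof of Proposition \ref{prop3.4} writes exactly this inclusion, not an equality.) Already for a nodal curve, $\omega_X$ is the subsheaf of $\nu_*\omega_{\bar X}(\bar C)$ cut out by the condition that the residues at the two preimages of a node sum to zero; in general $\omega_X$ consists of the sections of $\nu_*\omega_{\bar X}(\bar C)$ whose residue along $\bar C$ is anti-invariant under the gluing involution. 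Because the quotient is a nonzero sheaf supported on the non-normal locus, the identity $R^if_*\mathcal O_X(K_X+L)\cong R^i(f\circ\nu)_*\mathcal O_{\bar X}(K_{\bar X}+\bar C+\nu^*L)$ fails, and even the long exact sequence it induces does not give the vanishing without additional surjectivity statements. A secondary problem: $(\bar X,\bar C)$ need not be a log-canonical \emph{pair} at all, since $K_{\bar X}+\bar C$ need not be $\mathbb R$-Cartier (only $K_{\bar X}+\bar C+\nu^{-1}_*\Delta$ is); the log-canonical vanishing you want to quote must therefore be formulated as in Lemma \ref{lem3.1} plus Theorem \ref{thm3.3}, with $\bar C$ playing the role of the integral divisor $S\leq\Theta$, which is fixable but needs saying.

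This failure of $\omega_X$ to be recovered from the normalization is precisely why the paper does not normalize. Instead it first takes Koll\'ar's double cover to remove self-intersections in codimension one (using that $\mathcal O_X(K_X)$ is a direct summand of $p_*\mathcal O_{\widetilde X}(K_{\widetilde X})$), then builds a \emph{birational} quasi-log resolution $h:(Z,\Delta_Z)\to X$ by a reducible simple normal crossing pair and proves $h_*\mathcal O_Z(K_Z+E)\simeq\mathcal O_X(K_X)$ (Proposition \ref{prop3.4}); the birational snc model retains the gluing data along the conductor that normalization destroys. The vanishing is then supplied by Theorem \ref{thm3.7}, a Hodge-theoretic statement for simple normal crossing pairs, not by a vanishing theorem on the normalization. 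If you want to salvage your reduction, you would have to control $R^{i}f_*$ of the residue sheaf $\nu_*\mathcal O_{\bar X}(K_{\bar X}+\bar C+\nu^*L)/\mathcal O_X(K_X+L)$ and the connecting maps, which is not easier than the original problem.
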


Although Theorem \ref{thm1.3} has not been stated explicitly 
in the literature, it easily follows from \cite{fujino-slc}, \cite{fujino-vanishing}, 
\cite{fujino-foundation}, and so on. 
In our framework, Theorem \ref{thm1.1} can be seen as a generalization of 
Koll\'ar's vanishing theorem by the theory of mixed Hodge 
structures. 
The statement of Theorem \ref{thm1.1} 
is a naive generalization of 
the Kodaira vanishing theorem.  
However, 
Theorem \ref{thm1.1} is not a simple generalization of the 
Kodaira vanishing theorem from the Hodge-theoretic viewpoint. 

We note the dual form of the Kodaira vanishing theorem 
for Cohen--Macaulay projective semi-log-canonical pairs. 

\begin{cor}[{cf.~\cite[Corollary 6.6]{kss}}]\label{cor1.4}
Let $(X, \Delta)$ be a projective semi-log-canonical 
pair and let $L$ be an ample Cartier divisor on $X$. 
Assume that $X$ is Cohen--Macaulay. 
Then 
$$
H^i(X, \mathcal O_X(-L))=0
$$ 
for every $i<\dim X$. 
\end{cor}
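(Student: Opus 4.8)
The plan is to derive Corollary~\ref{cor1.4} from Theorem~\ref{thm1.1} by Serre duality. Put $n=\dim X$. Since $(X,\Delta)$ is semi-log-canonical, $X$ is demi-normal and of pure dimension $n$; moreover, by hypothesis $X$ is Cohen--Macaulay, so its dualizing sheaf $\omega_X$ is a Cohen--Macaulay (in particular $S_2$) coherent sheaf. On the Gorenstein locus $X^{\circ}\subset X$ --- which contains the smooth locus as well as the codimension-one nodal points, and whose complement has codimension $\geq 2$ --- the sheaf $\omega_X$ is invertible and agrees with $\mathcal O_X(K_X)$; as both $\omega_X$ and $\mathcal O_X(K_X)$ are $S_2$, we conclude $\omega_X\cong \mathcal O_X(K_X)$ on all of $X$.

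First I would apply Serre duality for the projective Cohen--Macaulay scheme $X$ of pure dimension $n$ to the line bundle $\mathcal O_X(-L)$ (recall that $L$ is Cartier). This gives an isomorphism
$$
H^i(X,\mathcal O_X(-L))\;\cong\;H^{n-i}\bigl(X,\mathcal O_X(L)\otimes\omega_X\bigr)^{\vee}\;\cong\;H^{n-i}\bigl(X,\mathcal O_X(K_X+L)\bigr)^{\vee},
$$
where the first isomorphism is Serre duality (using $\mathcal O_X(-L)^{\vee}\cong\mathcal O_X(L)$), and the second uses $\omega_X\cong\mathcal O_X(K_X)$ together with the fact that $L$ is Cartier, so that $\mathcal O_X(L)\otimes\mathcal O_X(K_X)\cong\mathcal O_X(K_X+L)$.

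Finally, Theorem~\ref{thm1.1} applied to the ample Cartier divisor $L$ yields $H^{n-i}(X,\mathcal O_X(K_X+L))=0$ whenever $n-i>0$, i.e.\ whenever $i<\dim X$. Combining this with the displayed isomorphism gives $H^i(X,\mathcal O_X(-L))=0$ for all $i<\dim X$, which is the assertion of the corollary.

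The substance of the argument is entirely contained in Theorem~\ref{thm1.1}; the remaining steps are bookkeeping. The only points that need a little care are that $\omega_X$ is genuinely the single coherent sheaf $\mathcal O_X(K_X)$ --- which is where demi-normality (hence $S_2$-ness and Gorenstein-in-codimension-one) is used --- and that Serre duality is available in the above form for the possibly reducible, non-normal scheme $X$, which is precisely what the Cohen--Macaulay hypothesis provides. No vanishing input beyond Theorem~\ref{thm1.1} is required, and the argument runs in parallel to \cite[Corollary 6.6]{kss}.
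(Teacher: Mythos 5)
Your proposal is correct and follows exactly the paper's own argument: Serre duality for the Cohen--Macaulay projective scheme $X$ identifies $H^i(X,\mathcal O_X(-L))$ with the dual of $H^{\dim X-i}(X,\mathcal O_X(K_X+L))$, which vanishes for $i<\dim X$ by Theorem \ref{thm1.1}. The extra care you take in identifying $\omega_X$ with $\mathcal O_X(K_X)$ via the $S_2$ condition and the Gorenstein-in-codimension-one locus is a sound elaboration of a step the paper leaves implicit.
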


\begin{rem}\label{rem1.5}
The dual form of the Kodaira vanishing theorem, that is, 
$H^i(X, \mathcal O_X(-L))=0$ for 
every ample Cartier divisor $L$ and every $i<\dim X$,  
implies that $X$ is Cohen--Macaulay (see, for example, \cite[Corollary 5.72]{kollar-mori}). 
Therefore, the assumption that $X$ is Cohen--Macaulay in Corollary \ref{cor1.4} is 
indispensable. 
\end{rem}

\begin{rem}\label{rem1.6} 
In \cite[Corollary 6.6]{kss}, Corollary \ref{cor1.4} was obtained for 
{\em{weakly}} semi-log-canonical pairs (see \cite[Definition 4.6]{kss}). 
Therefore, \cite[Corollary 6.6]{kss} is stronger than Corollary \ref{cor1.4}. 
The arguments in \cite{kss} depend on the theory of Du Bois singularities. 
Our approach (see \cite{fujino-unpublished}, 
\cite{fujino-introduction}, \cite{fujino-slc}, \cite{fujino-vanishing}, \cite{fujino-injectivity}, 
\cite{fujino-reid-fukuda},  
\cite{fujino-foundation}, and so on) to various vanishing theorems for 
reducible varieties uses the theory of mixed Hodge structures for cohomology with compact support 
and is different from \cite{kss}. 
\end{rem}

Finally, we note that we can easily generalize 
Theorem \ref{thm1.3} as follows. 

\begin{thm}[Main theorem II]\label{thm1.7}
Let $(X, \Delta)$ be a semi-log-canonical 
pair and let $f:X\to Y$ be a projective morphism 
between quasi-projective varieties. 
Let $L$ be a Cartier divisor 
on $X$ such that $L$ is nef and log big over $Y$ with 
respect to $(X, \Delta)$. 
Then $$
R^if_*\mathcal O_X(K_X+L)=0
$$ for 
every $i>0$. 
\end{thm}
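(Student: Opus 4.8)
The plan is to deduce Theorem~\ref{thm1.7} from Theorem~\ref{thm1.3} by a standard reduction combined with the vanishing theorem of Reid--Fukuda type for semi-log-canonical pairs (which is available in the references \cite{fujino-reid-fukuda}, \cite{fujino-foundation} that underpin Theorem~\ref{thm1.3}). First I would recall what ``nef and log big over $Y$ with respect to $(X,\Delta)$'' means: $L$ is $f$-nef, and for every semi-log-canonical stratum $W$ of $(X,\Delta)$ — including $X$ itself and every intersection of irreducible components in the non-normal locus, as well as the images of log-canonical centers on the normalization — the restriction $L|_W$ is big over $f(W)$. The statement of Theorem~\ref{thm1.3} is the special case in which $L$ is $f$-ample, since an $f$-ample divisor is automatically nef and log big over $Y$.

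The key steps, in order, are as follows. Step one: reduce to the case where $f$ is projective and $Y$ is affine, so that we must prove $H^i(X,\mathcal O_X(K_X+L))=0$ for $i>0$ for a projective semi-log-canonical pair $(X,\Delta)$ over an affine base, with $L$ nef and log big over $Y$. Step two: invoke the already-developed machinery. In the Hodge-theoretic framework of \cite{fujino-slc}, \cite{fujino-vanishing}, \cite{fujino-foundation}, one attaches to the semi-log-canonical pair $(X,\Delta)$ its normalization $\nu\colon (X^\nu,\Theta)\to X$ together with the data of the double locus, and one has a quasi-isomorphism identifying $R f_*\mathcal O_X(K_X+L)$ with a suitable complex built from the pair on the normalization. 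The point is that the Reid--Fukuda-type vanishing theorem already proved in that framework applies to divisors that are nef and log big over $Y$, not merely ample; this is exactly the generality in which those vanishing theorems were established. Step three: apply that vanishing theorem on the normalization, where $(X^\nu,\Theta)$ is log-canonical and $\nu^*L$ is nef and log big over $Y$ with respect to $(X^\nu,\Theta)$, and push down; the spectral sequence or the mixed Hodge complex argument assembling the pieces over the strata then yields $R^if_*\mathcal O_X(K_X+L)=0$ for $i>0$.

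Concretely, I would write: ``This follows from the same argument as in the proof of Theorem~\ref{thm1.3}, where the Kodaira vanishing theorem used there is replaced by the vanishing theorem of Reid--Fukuda type in \cite{fujino-foundation} (see also \cite{fujino-reid-fukuda}), which applies verbatim to Cartier divisors that are nef and log big over the base.'' If a more self-contained treatment is wanted, one replaces each appeal to an ample divisor in the derivation of Theorem~\ref{thm1.3} by the nef-and-log-big hypothesis and checks that the log bigness passes to every stratum — this is precisely why ``log big'' (as opposed to merely ``nef and big'') is the correct hypothesis for a reducible pair.

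The main obstacle I anticipate is bookkeeping rather than anything deep: one must verify that the log bigness hypothesis is inherited by all the relevant strata and their images on the normalization, so that the stratum-by-stratum vanishing inputs are all legitimately available, and then confirm that the gluing argument (the mixed Hodge structure on cohomology with compact support, or equivalently the $E_1$-degeneration / induction on the number of components) goes through with ``nef and log big'' in place of ``ample'' at every stage. Since the underlying injectivity and vanishing theorems in \cite{fujino-injectivity}, \cite{fujino-reid-fukuda}, \cite{fujino-foundation} were formulated with exactly this level of generality in mind, no genuinely new estimate is required; the proof is a routine, if careful, transcription of the proof of Theorem~\ref{thm1.3}.
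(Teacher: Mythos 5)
Your overall strategy --- rerun the proof of Theorem \ref{thm1.3} with the Reid--Fukuda-type vanishing theorem of \cite{fujino-foundation} in place of Theorem \ref{thm3.7} --- is exactly the paper's, and the reduction via the double cover and the quasi-log resolution $h:(Z,\Delta_Z)\to X$ of Proposition \ref{prop3.4} carries over as you expect. But there is one genuine gap that you dismiss as ``bookkeeping'': the divisor $E=\Exc(h)$ used in the proof of Theorem \ref{thm1.3} cannot be reused here. The Reid--Fukuda-type theorem requires $h^*L$ to be big over $Y$ on every stratum of the simple normal crossing pair $(Z,E)$, and this only follows from the hypothesis on $L$ when the $h$-image of each such stratum is an slc stratum of $(X,\Delta)$. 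For the full exceptional locus $E$ this is false in general: a component $E_i$ of $E$ with discrepancy $a(E_i,X,\Delta)>-1$ maps to a subvariety of $X$ that is not an slc stratum, and there $L$ need not be big over $Y$. So the log bigness is \emph{not} simply ``inherited by all the relevant strata''; the construction itself has to be changed.

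The paper's fix is to replace $E$ by $E'=\sum E_i$, the sum of only those $h$-exceptional divisors with discrepancy $-1$ (Remark \ref{rem3.6}); then the strata of $(Z,E')$ do map to slc strata of $(X,\Delta)$ (by Definition \ref{def2.2}), so $L$ is nef and log big over $Y$ with respect to $h:(Z,E')\to X$ and \cite[Theorem 5.7.3]{fujino-foundation} applies. This substitution is only legitimate because the pushforward identity $h_*\mathcal O_Z(K_Z+E')\simeq \mathcal O_X(K_X)$ still holds for the smaller divisor $E'$ --- a nontrivial refinement of Proposition \ref{prop3.4} that rests on Remark \ref{rem3.2}, i.e., on rerunning the proof of Lemma \ref{lem3.1} with only the discrepancy-$(-1)$ exceptional divisors. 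Your proposal never isolates this step, and a ``routine transcription'' of the proof of Theorem \ref{thm1.3} with $E$ left unchanged would fail at the application of the vanishing theorem. (Your step two, phrased in terms of a complex on the normalization, also drifts from what is actually done: the vanishing theorem is applied along $h:Z\to X$, not on the normalization.)
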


For the definition of nef and log big divisors on semi-log-canonical 
pairs, see Definition \ref{def2.3}. 
Theorem \ref{thm1.7} is a relative vanishing theorem of Reid--Fukuda 
type for semi-log-canonical pairs. 
It is obvious that Theorem \ref{thm1.1}, Theorem \ref{thm1.2}, and 
Corollary \ref{cor1.4} hold true under the weaker assumption that 
$L$ is nef and log big with respect to $(X, \Delta)$ by Theorem \ref{thm1.7}. 

\begin{ack}
The author was partially supported by the Grant-in-Aid for 
Young Scientists (A) 24684002 and Grant-in-Aid for 
Scientific Research (S) 24224001 from JSPS. 
He thanks Professor J\'anos Koll\'ar. 
This short paper is an answer to his question. 
\end{ack}

Throughout this paper, we will work over $\mathbb C$, the 
field of complex numbers. 
We will use the basic definitions and the standard notation of 
the minimal model program and semi-log-canonical pairs 
in \cite{fujino-funda}, \cite{fujino-slc}, \cite{fujino-foundation}, 
and so on. 

\section{Preliminaries}\label{sec2} 
In this section, we quickly recall some basic definitions and 
results for semi-log-canonical pairs for the reader's convenience. 
Throughout this paper, a variety means a reduced separated scheme of finite 
type over $\mathbb C$. 

\begin{say}[$\mathbb R$-divisors]\label{say2.1}
Let $D$ be an $\mathbb R$-divisor on an equidimensional variety $X$, that is, $D$ is 
a finite formal $\mathbb R$-linear combination 
$$
D=\sum _i d_i D_i
$$ 
of irreducible reduced subschemes $D_i$ of codimension 
one. Note that $D_i\ne D_j$ for $i\ne j$ and that $d_i \in 
\mathbb R$ for every $i$. 
For every real number $x$, $\lceil x\rceil$ is the integer defined by $x\leq 
\lceil x\rceil <x+1$. 
We put $\lceil D\rceil =\sum _i \lceil d_i \rceil D_i$ and 
$D^{<1}=\sum _{d_i <1}d_i D_i$. 
We call $D$ a boundary (resp.~subboundary) $\mathbb R$-divisor 
if $0\leq 
d_i \leq 1$ (resp.~$d_i \leq 1$) for 
every $i$. 
\end{say}

Let us recall the definition of semi-log-canonical 
pairs. 

\begin{defn}[Semi-log-canonical pairs]\label{def2.2}
Let $X$ be an equidimensional variety that
satisfies Serre's $S_2$ condition and is normal crossing in 
codimension one. Let $\Delta$ be an effective
$\mathbb R$-divisor 
such that no irreducible components of $\Delta$ are contained 
in the singular locus of $X$. 
The pair $(X, \Delta)$ is called a semi-log-canonical pair if 
\begin{itemize}
\item[(1)] $K_X + \Delta$ is $\mathbb R$-Cartier, and
\item[(2)] $(X^\nu, \Theta)$ is log-canonical, where $\nu:X^\nu\to X$ is the normalization 
and $K_{X^\nu}+\Theta=\nu^*(K_X+\Delta)$. 
\end{itemize}
A subvariety $W$ of $X$ is called an slc stratum with respect to 
$(X, \Delta)$ if there exist a resolution of singularities $\rho:Z\to X^\nu$ and 
a prime divisor $E$ on $Z$ such that 
$a(E, X^\nu, \Theta)=-1$ and $\nu\circ \rho (E)=W$ or if 
$W$ is an irreducible component of $X$. 
\end{defn}

For the basic definitions and properties of 
log-canonical pairs, see \cite{fujino-funda}. 
For the details of semi-log-canonical pairs, see \cite{fujino-slc}. 
We need the notion of nef and log big divisors on semi-log-canonical 
pairs for Theorem \ref{thm1.7} 

\begin{defn}[Nef and log big divisors on semi-log-canonical pairs]\label{def2.3}
Let $(X, \Delta)$ be a semi-log-canonical pair and let $f:X\to Y$ be 
a projective morphism between quasi-projective varieties. 
Let $L$ be a Cartier divisor on $X$. 
Then $L$ is nef and log big over $Y$ with respect to $(X, \Delta)$ 
if $L$ is nef and $\mathcal O_X(L)|_W$ 
is big over $Y$ for every slc stratum $W$ of $(X, \Delta)$. 
We simply say that $L$ is nef and log big with respect to $(X, \Delta)$ when 
$Y=\Spec \mathbb C$. 
\end{defn}

Roughly speaking, in \cite{fujino-slc}, we proved the following theorem. 

\begin{thm}[{see \cite[Theorem 1.2 and Remark 1.5]{fujino-slc}}]\label{thm2.4}
Let $(X, \Delta)$ be a quasi-projective semi-log-canonical pair. 
Then we can construct a smooth quasi-projective variety $M$ 
with $\dim M=\dim X+1$, 
a simple normal crossing divisor $Z$ on $M$, 
a subboundary $\mathbb R$-divisor 
$B$ on $M$, and 
a projective surjective morphism 
$h:Z\to X$ with the following properties. 
\begin{itemize}
\item[(1)] $B$ and $Z$ have no common irreducible components. 
\item[(2)] $\Supp (Z+B)$ is a simple normal crossing divisor on $M$. 
\item[(3)] $K_Z+\Delta_Z\sim _{\mathbb R}h^*(K_X+\Delta)$ such that $\Delta_Z=B|_Z$. 
\item[(4)] $h_*\mathcal O_Z(\lceil -\Delta_Z^{<1}\rceil)\simeq \mathcal O_X$. 
\end{itemize}
By the properties $(1)$, $(2)$, $(3)$, and $(4)$, $[X, K_X+\Delta]$ has 
a quasi-log structure with only qlc singularities. 
Furthermore, if the irreducible components of $X$ have no self-intersection in 
codimension one, then we can make $h:Z\to X$ birational. 
\end{thm}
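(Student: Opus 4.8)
The plan is to construct $M$, $Z$, $B$, and $h$ from a log resolution of the normalization of $X$ together with the gluing data that encodes the semi-log-canonical structure.

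First, I would pass to the normalization $\nu\colon X^\nu\to X$. By Definition \ref{def2.2} the pair $(X^\nu,\Theta)$ is log-canonical with $K_{X^\nu}+\Theta=\nu^*(K_X+\Delta)$, and we may write $\Theta=\Theta'+D^\nu$, where $D^\nu$ is the conductor divisor on $X^\nu$; its normalization carries an involution $\sigma$ such that $X$ is recovered from $X^\nu$ by gluing along $\sigma$ (the gluing description of slc pairs; see \cite{fujino-slc}). Next I would take a projective birational morphism $\rho\colon\widetilde X\to X^\nu$ from a smooth quasi-projective variety with $K_{\widetilde X}+\widetilde\Theta=\rho^*(K_{X^\nu}+\Theta)$, where $\widetilde\Theta$ is a subboundary $\mathbb R$-divisor and $\Supp\widetilde\Theta$, the $\rho$-exceptional locus, and the strict transform of $D^\nu$ are all simple normal crossing; one can always refine a log resolution so that the finitely many divisors in question become simultaneously simple normal crossing. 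Since $(X^\nu,\Theta)$ is log-canonical and $\Theta\ge 0$, the divisor $\lceil -\widetilde\Theta^{<1}\rceil$ is effective and $\rho$-exceptional, so $\rho_*\mathcal O_{\widetilde X}(\lceil -\widetilde\Theta^{<1}\rceil)\simeq\mathcal O_{X^\nu}$.

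The heart of the construction is the third step: realizing the gluing $\sigma$ as part of a genuine simple normal crossing structure in one higher dimension. I would embed $\widetilde X$ as one component of a reduced simple normal crossing divisor $Z$ on a smooth quasi-projective variety $M$ with $\dim M=\dim X+1$, where the remaining components of $Z$ are auxiliary pieces — for instance $\mathbb P^1$-bundles over suitable birational models of the conductor — glued to $\widetilde X$ so that the identification they induce between the two $\rho$-preimages of $D^\nu$ is exactly $\sigma$. By construction one obtains a projective surjective morphism $h\colon Z\to X$ that restricts to $\nu\circ\rho$ on the main component and collapses each auxiliary component onto the corresponding part of the conductor. Then I would choose the subboundary $\mathbb R$-divisor $B$ on $M$ so that $B$ and $Z$ have no common component, $\Supp(Z+B)$ is simple normal crossing, and $\Delta_Z:=B|_Z$ satisfies $K_Z+\Delta_Z\sim_{\mathbb R}h^*(K_X+\Delta)$; on the main component this is the equality from the second step, and on the auxiliary components it is a direct adjunction computation arranged by a suitable choice of the relevant coefficients of $B$.

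Finally I would check $(1)$--$(4)$ and the two supplementary assertions. Properties $(1)$, $(2)$ and $(3)$ hold by the construction of $B$, $Z$, and $h$. Property $(4)$ is the delicate point: one computes $h_*\mathcal O_Z(\lceil -\Delta_Z^{<1}\rceil)$ by combining $\rho_*\mathcal O_{\widetilde X}(\lceil -\widetilde\Theta^{<1}\rceil)\simeq\mathcal O_{X^\nu}$ with the descent of the $S_2$-ification along the gluing: since $X$ satisfies $S_2$ and the conductor is reduced, a local section of $\mathcal O_X$ is precisely a $\sigma$-invariant section of $\mathcal O_{X^\nu}$, which is exactly the data that the gluing built into $Z$ records, so the twisted pushforward collapses to $\mathcal O_X$. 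Given $(1)$--$(4)$, equipping $[X,K_X+\Delta]$ with a quasi-log structure with only qlc singularities is then formal (see \cite{fujino-foundation}); and when the irreducible components of $X$ have no self-intersection in codimension one, the involution $\sigma$ never identifies a component of $D^\nu$ with itself, so the auxiliary components are unnecessary and one can take $Z$ with $h\colon Z\to X$ birational. The main obstacle is the combination of the third step and property $(4)$: fitting the gluing involution inside a smooth ambient variety of dimension $\dim X+1$ as part of an honest simple normal crossing divisor, while retaining enough control on the pushforward of $\mathcal O_Z$ that it descends exactly to $\mathcal O_X$ — this is where the $S_2$ hypothesis and the compatibility of the resolution with the conductor are used in an essential way.
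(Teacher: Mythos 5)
First, note that this paper does not prove Theorem \ref{thm2.4} at all: it is imported from \cite[Theorem 1.2 and Remark 1.5]{fujino-slc}, and the text explicitly defers all details to \cite[Section 4]{fujino-slc}. So your proposal can only be measured against that external construction, not against anything in the present paper. Your outline does identify the correct skeleton --- normalize, take a log resolution compatible with the conductor, glue according to the involution $\sigma$, embed the result as a simple normal crossing divisor in a smooth $M$ with $\dim M=\dim X+1$ --- and your reduction of property $(4)$ to the $S_2$/conductor description of $\mathcal O_X$ inside $\nu_*\mathcal O_{X^\nu}$ is the right mechanism.

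However, there are genuine gaps, and one step fails as written. The ``auxiliary $\mathbb P^1$-bundle'' device breaks down in exactly the case the theorem is designed to cover: when an irreducible component of $X$ self-intersects in codimension one, the involution $\sigma$ folds a single conductor component $W\subset X^\nu$ onto itself, so both sections of your bridge must be attached to the same divisor $W$ in the main component (or one must choose, over each point of the double locus, which of its two preimages in $W$ meets which section --- a splitting of a nontrivial double cover, which does not exist). In the first alternative the resulting $Z$ has three analytic branches through a general point over $W$, so it is not normal crossing in codimension one and cannot be a simple normal crossing divisor on any smooth $M$. The standard repair is Koll\'ar's finite double cover \cite[Lemma 5.1]{fujino-slc}, used in this very paper in the proofs of Theorems \ref{thm1.3} and \ref{thm1.7}; your proposal never invokes it, and without it the dichotomy in the last sentence of the theorem (birational $h$ exactly when there is no self-intersection in codimension one) cannot be accounted for. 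Beyond this, three further points are asserted rather than proved: (i) the existence of a smooth ambient $M$ carrying $Z\cup \Supp B$ as a simple normal crossing configuration requires the semi-snc resolution results of \cite{bierstone-p}, not just an ordinary log resolution of $X^\nu$; (ii) the log resolution must be chosen so that $\sigma$ lifts to an identification of the strict transforms of the conductor branches, which is not automatic; (iii) the $\mathbb R$-linear (not merely fiberwise numerical) equivalence $K_Z+\Delta_Z\sim_{\mathbb R}h^*(K_X+\Delta)$ on the contracted auxiliary components needs an actual adjunction computation. These are precisely the points carried out in \cite[Section 4]{fujino-slc}.
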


For the details of Theorem \ref{thm2.4}, see \cite{fujino-slc}. 
In this paper, we do not discuss quasi-log schemes. 
For the theory of quasi-log schemes, 
see \cite{fujino-introduction}, \cite{fujino-pull}, \cite{fujino-foundation}, and so on. 

\begin{rem}\label{rem2.5}
The morphism $h:(Z, \Delta_Z)\to X$ in Theorem \ref{thm2.4} is called a quasi-log resolution. 
Note that the quasi-log structure of $[X, K_X+\Delta]$ obtained 
in Theorem \ref{thm2.4} is compatible with 
the original semi-log-canonical structure of $(X, \Delta)$. 
For the details, see \cite{fujino-slc}. 
We also note that we have to know how to construct $h:Z\to X$ in \cite[Section 
4]{fujino-slc} 
for the proof of Theorem \ref{thm1.3}. 
\end{rem}

We note the notion of simple normal crossing pairs. 
It is useful for our purposes in this paper. 

\begin{defn}[Simple normal crossing 
pairs]\label{def2.6}Let $Z$ be a simple normal crossing divisor 
on a smooth 
variety $M$ and let $B$ be an $\mathbb R$-divisor 
on $M$ such that 
$\Supp (B+Z)$ is a simple normal crossing divisor and that 
$B$ and $Z$ have no common irreducible components. 
We put $\Delta_Z=B|_Z$ and consider the pair $(Z, \Delta_Z)$. 
We call $(Z, \Delta_Z)$ a globally embedded simple normal 
crossing pair. 
A pair $(Y, \Delta_Y)$ is called a simple normal crossing 
pair if it is Zariski locally isomorphic to 
a globally embedded simple normal crossing 
pair. 

If $(X ,0)$ is a simple normal crossing pair, then 
$X$ is called a simple normal crossing variety. 
Let $X$ be a simple normal crossing variety and 
let $D$ be a Cartier divisor on $X$. 
If $(X, D)$ is a simple normal crossing pair and 
$D$ is reduced, 
then $D$ is called a simple normal crossing divisor on $X$. 
\end{defn}

For the details of 
simple normal crossing pairs, 
see \cite[Definition 2.8]{fujino-slc}, 
\cite[Definition 2.6]{fujino-vanishing}, 
\cite[Definition 2.6]{fujino-injectivity}, 
\cite[Definition 2.4]{fujino-pull}, 
\cite[5.2.~Simple normal crossing pairs]{fujino-foundation}, and so on. 
We note that a simple 
normal crossing pair is called {\em{semi-snc}} in 
\cite[Definition 1.10]{kollar} (see also \cite[Definition 1.1]{bierstone-p}) 
and 
that a globally 
embedded simple 
normal crossing pair is called an {\em{embedded semi-snc pair}} 
in \cite[Definition 1.10]{kollar}. 

\section{Proof of Theorem \ref{thm1.3}}\label{sec3}

In this section, we prove Theorem \ref{thm1.3} and 
discuss some related results. 

Let us start with an easy lemma. 
The following lemma is more or less well-known to 
the experts. 

\begin{lem}[{\cite[Lemma 3.15]{kss}}]\label{lem3.1}
Let $X$ be a normal irreducible variety and let $\Delta$ be 
an effective $\mathbb R$-divisor 
on $X$ such that 
$(X, \Delta)$ is log-canonical. 
Let $\rho:Z\to X$ be a proper birational morphism 
from a smooth variety $Z$ such that 
$E=\Exc(\rho)$ and $\Exc(\rho)\cup \Supp f^{-1}_*\Delta$ are simple 
normal crossing divisors on $Z$. 
Let $S$ be an integral divisor on $X$ such that 
$0\leq S\le \Delta$ and let $T$ be the strict transform of $S$. 
Then we have $\rho_*\mathcal O_Z(K_Z+T+E)\simeq 
\mathcal O_X(K_X+S)$. 
\end{lem}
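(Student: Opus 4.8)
The plan is to reduce the statement to a standard computation on the smooth variety $Z$, using the fact that $K_Z+T+E = \rho^*(K_X+S) + (\text{something supported on } E)$ and that the negative part of that "something" can be absorbed by a Kawamata--Viehweg/Fujino-type vanishing argument for the higher direct images, while the non-negative integral part contributes nothing beyond $\rho^*(K_X+S)$ after pushing forward.

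First I would write $K_Z = \rho^*(K_X+\Delta) + \sum_i a(E_i, X, \Delta) E_i$ over the components $E_i$ of $E$ together with the strict transforms of the components of $\Delta$; since $(X,\Delta)$ is log-canonical we have $a(E_i,X,\Delta)\ge -1$ for every $\rho$-exceptional $E_i$ and for the strict transform of any component of $\Delta$. Writing $\Delta = S + (\Delta - S)$ with $0\le S\le \Delta$ integral, and letting $T = \rho^{-1}_*S$, a short bookkeeping of coefficients gives
\[
K_Z + T + E = \rho^*(K_X+S) + F,
\]
where $F$ is a $\rho$-exceptional $\mathbb{R}$-divisor (exceptional because $T+E$ already accounts for the non-exceptional part, namely $T$, which maps onto $S$). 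The key point is that $\lceil F \rceil \ge 0$: each coefficient of $F$ on a $\rho$-exceptional prime divisor $E_i$ equals $1 + a(E_i,X,\Delta) + (\text{coefficient of } E_i \text{ in } \rho^*(\Delta - S)) \ge 1 + (-1) + 0 = 0$ when $E_i$ is not a component of the strict transform data, and the arithmetic works out so that $\lceil F\rceil$ is an effective $\rho$-exceptional integral divisor. Hence $\rho_*\mathcal{O}_Z(\lceil K_Z+T+E\rceil) = \rho_*\mathcal{O}_Z(K_Z+T+E)$ (since $T+E$ is already integral) and the projection formula together with $\rho_*\mathcal{O}_Z(\lceil F\rceil) \simeq \mathcal{O}_X$ (an effective exceptional divisor on a variety with $X$ normal) would give the desired isomorphism — provided one also checks there is no extra contribution, i.e.\ that $\rho_*\mathcal{O}_Z(K_Z+T+E)$ has no "boundary" ambiguity. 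Since $K_X+S$ is the pushforward of $K_Z+T$ and the $+E$ adds only exceptional divisors with the right ceiling behavior, the isomorphism $\rho_*\mathcal{O}_Z(K_Z+T+E)\simeq \mathcal{O}_X(K_X+S)$ follows.

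The main obstacle I anticipate is making the coefficient bookkeeping fully rigorous, in particular verifying that $F = K_Z+T+E-\rho^*(K_X+S)$ is genuinely exceptional and that $\lceil F\rceil$ is effective. The subtlety is that components of the strict transform of $\Delta-S$ (the part of $\Delta$ not included in $S$) may also appear in $Z$, and one must separate the exceptional loci from the strict transforms carefully; the log-canonical hypothesis $a(E_i,X,\Delta)\ge -1$ is used exactly on the exceptional divisors, while on the non-exceptional components the coefficient of $\Delta$ is at most $1$ by the boundary condition, so after subtracting $S$ one still gets something $\le$ what $T+E$ supplies. One clean way to finish is: take $\rho_*$ of the inclusion $\mathcal{O}_Z(K_Z+T) \hookrightarrow \mathcal{O}_Z(K_Z+T+E)$, note the left side pushes to $\mathcal{O}_X(K_X+S)$ by normality and the projection formula applied to $K_Z+T = \rho^*(K_X+S)+(\text{exceptional})$, and then show the inclusion $\rho_*\mathcal{O}_Z(K_Z+T)\hookrightarrow \rho_*\mathcal{O}_Z(K_Z+T+E)$ is an isomorphism by checking it in codimension one on $X$ and invoking $S_2$/normality of $X$ for reflexivity. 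This reduces everything to a local computation near the generic points of the components of $\Delta$, which is where the log-canonical condition does its work.
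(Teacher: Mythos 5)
There is a genuine gap, and it is precisely the difficulty the lemma is designed to circumvent: $K_X+S$ is not assumed to be $\mathbb R$-Cartier (only $K_X+\Delta$ is), so the divisor $\rho^*(K_X+S)$ on which your entire decomposition $K_Z+T+E=\rho^*(K_X+S)+F$ rests is undefined, as is the $\rho^*(\Delta-S)$ appearing in your coefficient bookkeeping. The paper's proof avoids every pullback except that of the single $\mathbb R$-Cartier divisor $K_X+\Delta$: after perturbing $\Delta$ to a $\mathbb Q$-divisor, it writes $\rho^*(a(K_X+\Delta))=aK_Z+a\Delta'+\Xi$ with $\Xi\leq aE$ (this is where log-canonicity enters), deduces $(\rho^*g)+K_Z+\Delta'+E\geq 0$ for an individual rational section $g$ of $\mathcal O_X(K_X+S)$, and then checks effectivity of $(\rho^*g)+K_Z+T+E$ component by component: the non-exceptional part is controlled by writing it as $\rho^{-1}_*\bigl((g)+K_X+S\bigr)+(\text{exceptional})$, and the exceptional part by subtracting the effective, non-exceptional divisor $\Delta'-T$ from the known-effective $(\rho^*g)+K_Z+\Delta'+E$. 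Your instinct about where the two hypotheses $a(E_i,X,\Delta)\geq -1$ and $0\leq S\leq\Delta$ each do their work is correct and mirrors the paper's Claim, but the argument has to be run on sections, not on a pullback of $K_X+S$.

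Separately, your proposed ``clean way to finish'' proves too much and is false: the inclusion $\rho_*\mathcal O_Z(K_Z+T)\hookrightarrow\rho_*\mathcal O_Z(K_Z+T+E)$ is not an isomorphism in general. Already for $\Delta=S=0$ and $X$ a cone over an elliptic curve (log-canonical but not rational), one has $\rho_*\omega_Z\subsetneq\omega_X$ while the lemma asserts $\rho_*\mathcal O_Z(K_Z+E)\simeq\omega_X$; the divisor $E$ with coefficient exactly $1$ is essential and cannot be discarded. The codimension-one argument fails because $\rho_*\mathcal O_Z(K_Z+T)$ need not be $S_2$, so agreement with $\mathcal O_X(K_X+S)$ outside a codimension-two subset yields only the inclusion $\rho_*\mathcal O_Z(K_Z+T)\subset\mathcal O_X(K_X+S)$, not equality.
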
 
We give a proof of Lemma \ref{lem3.1} here for the reader's convenience. 
The following proof is in \cite{kss}. 

\begin{proof}
We choose $K_Z$ and $K_X$ satisfying $\rho_*K_Z=K_X$. 
It is obvious that $\rho_*\mathcal O_Z(K_Z+T+E)\subset 
\mathcal O_X(K_X+S)$ since $E$ is $\rho$-exceptional 
and $\mathcal O_X(K_X+S)$ satisfies Serre's $S_2$ condition. 
Therefore, it is sufficient to prove that 
$\mathcal O_X(K_X+S)\subset \rho_*\mathcal O_Z(K_Z+T+E)$. 
Note that we may assume that $\Delta$ is an effective $\mathbb Q$-divisor 
by perturbing the coefficients of $\Delta$ slightly. 
Let $U$ be any nonempty Zariski open set of $X$. 
We will see that $\Gamma (U, \mathcal O_X(K_X+S))\subset 
\Gamma (U, \rho_*\mathcal O_Z(K_Z+T+E))$. 
We take a nonzero rational function $g$ of $U$ such that 
$\left((g)+K_X+S\right)|_U\geq 0$, that is, $g\in \Gamma (U, \mathcal O_X(K_X+S))$, 
where $(g)$ is the principal divisor associated to $g$. 
We assume that $U=X$ by shrinking $X$ for simplicity. 
Let $a$ be a positive integer such that 
$a(K_X+\Delta)$ is Cartier. 
We have $\rho^*(a(K_X+\Delta))=aK_Z+a\Delta'+\Xi$, where  
$\Delta'$ is the strict transform of $\Delta$ and $\Xi$ is a $\rho$-exceptional 
integral divisor on $Z$. 
By assumption, we have $0\leq (g)+K_X+S\leq (g)+K_X+\Delta$. 
Then we obtain that 
\begin{align*}
0&\leq (\rho^*g^a)+\rho^*(aK_X+a\Delta)\\ &\leq a\left((\rho^*g)+K_Z+\Delta'+E\right) 
\end{align*} since $\Xi \leq a E$. 
Thus we obtain $(\rho^*g)+K_Z+\Delta'+E\geq 0$. 
\begin{claim}$(\rho^*g)+K_Z+T+E\geq 0$. 
\end{claim}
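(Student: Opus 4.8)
The plan is to verify the effectivity $(\rho^*g)+K_Z+T+E\ge 0$ prime-divisor by prime-divisor on $Z$, distinguishing the $\rho$-exceptional components from the others.

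First I would treat the $\rho$-exceptional prime divisors $P$ on $Z$. Such a $P$ is a component of $E$, but it is not a component of $T$ nor of $\Delta'$, since both are strict transforms of effective divisors on $X$ and therefore contain no $\rho$-exceptional divisor. Hence the coefficient of $P$ in $(\rho^*g)+K_Z+T+E$ equals its coefficient in $(\rho^*g)+K_Z+E$, which in turn equals its coefficient in $(\rho^*g)+K_Z+\Delta'+E$; the latter divisor has just been shown to be effective, so this coefficient is $\ge 0$.

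Next I would treat a prime divisor $P$ on $Z$ that is not $\rho$-exceptional, and let $\bar P=\rho(P)$ be the prime divisor on $X$ of which $P$ is the strict transform. Because $\rho$ is birational, $\mathrm{ord}_P(\rho^*g)=\mathrm{ord}_{\bar P}(g)$; because $\rho_*K_Z=K_X$, the coefficient of $P$ in $K_Z$ equals the coefficient of $\bar P$ in $K_X$; because $T$ is the strict transform of $S$, the coefficient of $P$ in $T$ equals the coefficient of $\bar P$ in $S$; and $P$ does not occur in $E$. Summing these, the coefficient of $P$ in $(\rho^*g)+K_Z+T+E$ is exactly the coefficient of $\bar P$ in $(g)+K_X+S$, which is $\ge 0$ since $g\in\Gamma(X,\mathcal O_X(K_X+S))$ means $(g)+K_X+S\ge 0$.

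Since every prime divisor on $Z$ falls into one of these two cases, the claim follows. There is no real obstacle here, only a bookkeeping point worth stressing: the effectivity of $(\rho^*g)+K_Z+\Delta'+E$ already established is used only along $\rho$-exceptional divisors, where $T$ and $\Delta'$ both vanish, whereas along the non-exceptional divisors — where $T$ may be far smaller than $\Delta'$ — one must instead invoke the full hypothesis $g\in\Gamma(X,\mathcal O_X(K_X+S))$ directly rather than the weaker membership in $\Gamma(X,\mathcal O_X(K_X+\Delta))$ used earlier.
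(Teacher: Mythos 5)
Your proposal is correct and is essentially the paper's own argument: the paper packages the same two observations as the decompositions $(\rho^*g)+K_Z+T+E=\rho^{-1}_*\left((g)+K_X+S\right)+F+E$ (handling the non-exceptional components via $(g)+K_X+S\geq 0$) and $(\rho^*g)+K_Z+T+E=(\rho^*g)+K_Z+\Delta'+E-(\Delta'-T)$ with $\Delta'-T$ non-exceptional (handling the exceptional components via the already-established effectivity), which is exactly your case division written in divisor form. No gap; your version just makes the coefficient bookkeeping explicit.
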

\begin{proof}[Proof of Claim]
By construction, 
$$
(\rho^*g)+K_Z+T+E= \rho^{-1}_*\left( (g)+K_X+S\right) +F+E, 
$$ 
where every irreducible component of $F+E$ is $\rho$-exceptional. 
We also have 
$$
(\rho^*g)+K_Z+T+E=(\rho^*g)+K_Z+\Delta'+E-(\Delta'-T), 
$$ 
where $\Delta'-T$ is effective and no irreducible components 
of $\Delta'-T$ are $\rho$-exceptional.  
Note that $\rho^{-1}_*\left( (g)+K_X+S\right)\geq 0$ and 
$(\rho^*g)+K_Z+\Delta'+E\geq 0$. 
Therefore, we have $(\rho^*g)+K_Z+T+E\geq 0$. 
\end{proof}
This means that $\Gamma (U, \mathcal O_X(K_X+S))\subset 
\Gamma (U, \rho_*\mathcal O_Z(K_Z+T+E))$ for any 
nonempty Zariski open set 
$U$. 
Thus, we have $\mathcal O_X(K_X+S)=\rho_*\mathcal O_Z(K_Z+T+E)$. 
\end{proof}

We need the following remark for the proof of Theorem \ref{thm1.7} in Section \ref{sec4}. 

\begin{rem}\label{rem3.2} 
In Lemma \ref{lem3.1}, we put $$
E'=\sum E_i
$$ where $E_i$'s are the $\rho$-exceptional divisors with 
$a(E_i, X, \Delta)=-1$. 
Then we see that 
$\rho_*\mathcal O_Z(K_Z+T+E')\simeq \mathcal O_X(K_X+S)$ 
by the proof of Lemma \ref{lem3.1}.    
\end{rem}

Although Theorem \ref{thm1.2} is a special case of 
Theorem \ref{thm1.1} and Theorem \ref{thm1.3}, 
we give a simple proof of Theorem \ref{thm1.2} for the 
reader's convenience. 
For this purpose, 
let us recall an easy generalization of Koll\'ar's vanishing theorem. 

\begin{thm}[{\cite[Theorem 2.6]{fujino-higher}}]\label{thm3.3}
Let $f:V\to W$ be a morphism 
from a smooth projective variety $V$ onto a projective 
variety $W$. 
Let $D$ be a simple normal crossing divisor on $V$. 
Let $H$ be an ample Cartier divisor on $W$. 
Then $H^i(W, \mathcal O_W(H)\otimes R^jf_*\mathcal O_V(K_V+D))=0$ for 
$i>0$ and $j\geq 0$. 
\end{thm}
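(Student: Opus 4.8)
The plan is to prove Theorem~\ref{thm3.3} along the lines of Koll\'ar's argument for the vanishing of the higher direct images of dualizing sheaves, with $\omega_V$ replaced by $\mathcal O_V(K_V+D)$. It rests on three ingredients: a Hodge-theoretic splitting of $Rf_*\mathcal O_V(K_V+D)$, the Koll\'ar--Esnault--Viehweg injectivity theorem applied on $V$, and Serre's vanishing theorem on $W$.

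First I would record the decomposition
\[
Rf_*\mathcal O_V(K_V+D)\ \simeq\ \bigoplus_{j\geq 0}R^jf_*\mathcal O_V(K_V+D)[-j]
\]
in the derived category of coherent sheaves on $W$, the logarithmic analogue of Koll\'ar's splitting. This is the Hodge-theoretic heart of the matter: over a dense Zariski open subset $W^\circ\subseteq W$ one arranges $f$ to be smooth with $D$ relatively simple normal crossing, so that $\bigl(f^{-1}W^\circ,\,D|_{f^{-1}W^\circ}\bigr)\to W^\circ$ is a family of log smooth pairs; then the sheaves $R^jf_*\mathcal O_V(K_V+D)$ are controlled by the Hodge filtrations of the associated variations of mixed Hodge structure, and Deligne's degeneration theorems together with the local invariant cycle theorem yield both the torsion-freeness of the $R^jf_*\mathcal O_V(K_V+D)$ and the displayed splitting, exactly as in Koll\'ar's proof. (One may instead quote Saito's theory of mixed Hodge modules, or the injectivity-based construction in \cite{fujino-injectivity}, \cite{fujino-foundation}.) Writing $\mathcal G_j:=R^jf_*\mathcal O_V(K_V+D)$, the splitting and the projection formula show that, for every line bundle $\mathcal M$ on $W$ and functorially in $\mathcal M$, the group $H^i(W,\mathcal G_j\otimes\mathcal M)$ is a direct summand of $H^{i+j}(V,\mathcal O_V(K_V+D)\otimes f^*\mathcal M)$.

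Now fix $i>0$. Choose $m\gg 0$ so that $|mH|$ is base point free and $H^i(W,\mathcal G_j\otimes\mathcal O_W((m+1)H))=0$ for every $j$ by Serre vanishing, and pick a general $s\in H^0(W,\mathcal O_W(mH))$. Since this last group is the target of multiplication by $s$, it suffices to prove that
\[
H^i\bigl(W,\mathcal G_j\otimes\mathcal O_W(H)\bigr)\ \xrightarrow{\,\cdot s\,}\ H^i\bigl(W,\mathcal G_j\otimes\mathcal O_W((m+1)H)\bigr)
\]
is injective. By the previous paragraph this map is a direct summand of
\[
H^{i+j}\bigl(V,\mathcal O_V(K_V+D)\otimes f^*\mathcal O_W(H)\bigr)\ \xrightarrow{\,\cdot f^*s\,}\ H^{i+j}\bigl(V,\mathcal O_V(K_V+D)\otimes f^*\mathcal O_W((m+1)H)\bigr),
\]
in which $f^*H$ is a semiample Cartier divisor on $V$, the section $f^*s$ belongs to $H^0\bigl(V,\mathcal O_V(mf^*H)\bigr)$ with $m\geq 1$, and $\operatorname{div}(f^*s)+D$ is a simple normal crossing divisor because $s$ is general and $f^*|mH|$ is base point free. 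Hence the Koll\'ar--Esnault--Viehweg injectivity theorem, in its logarithmic form for the reduced simple normal crossing divisor $D$, shows that the map on $V$ is injective; a direct summand of an injective map is injective, so the map on $W$ is injective, and therefore $H^i(W,\mathcal G_j\otimes\mathcal O_W(H))=0$ for every $i>0$ and $j\geq 0$.

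The only genuinely substantial point is the decomposition of $Rf_*\mathcal O_V(K_V+D)$ in the first step: this is where the statement ceases to be a formal consequence of the Kodaira vanishing theorem and becomes a theorem of Hodge theory, and it is precisely where the smoothness of $V$ and the simple normal crossing hypothesis on $D$ enter. The reduction of the vanishing on $W$ to an injectivity statement on $V$ --- by twisting up with a large multiple of $H$ and invoking Serre vanishing --- is then routine, and the only positivity of $f^*H$ that it requires is semiampleness, which is all the injectivity theorem needs.
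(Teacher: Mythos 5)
The paper itself gives no proof of Theorem~\ref{thm3.3} --- it defers entirely to \cite[Theorem 2.6]{fujino-higher} --- and your argument is essentially the Koll\'ar-style proof given in that reference: the derived-category splitting $Rf_*\mathcal O_V(K_V+D)\simeq\bigoplus_j R^jf_*\mathcal O_V(K_V+D)[-j]$ coming from the variation of mixed Hodge structure on the open part, the reduction of the vanishing on $W$ to an injectivity statement on $V$ via Serre vanishing and a general member of $|mH|$, and the logarithmic Esnault--Viehweg/Koll\'ar injectivity theorem applied to the semiample divisor $f^*H$ and a general section whose divisor contains no stratum of $D$. Your reduction is complete and correct, and the one ingredient you leave as a black box --- the splitting and torsion-freeness --- is exactly the Hodge-theoretic theorem proved in \cite{fujino-higher}, so you have correctly isolated where the real content lies.
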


For the proof, see \cite[Theorem 2.6]{fujino-higher} 
(see also \cite{fujino-on-injectivity}, 
\cite[Sections 5 and 6]{fujino-funda}, and so on). 
If $D=0$ in Theorem \ref{thm3.3}, then Theorem \ref{thm3.3} is 
nothing but Koll\'ar's vanishing theorem. 
For more general results, see \cite{fujino-on-injectivity}, \cite{fujino-funda}, 
and so on (see also Theorem \ref{thm3.7} below, \cite{fujino-vanishing}, 
\cite[Chapter 5]{fujino-foundation}, and so on, for vanishing theorems for 
reducible varieties). 

Let us start the proof of Theorem \ref{thm1.2} 
(see \cite[Corollary 2.9]{fujino-introduction} when $\Delta=0$). 

\begin{proof}[Proof of Theorem \ref{thm1.2}]
We take a projective birational morphism $\rho:Z\to X$ from 
a smooth projective 
variety $Z$ such that $E=\Exc(\rho)$ and 
$\Exc(\rho)\cup \Supp \rho^{-1}_*\Delta$ are simple normal crossing 
divisors on $Z$. 
By Theorem \ref{thm3.3}, 
we obtain that $H^i(X, \mathcal O_X(L)\otimes \rho_*\mathcal O_Z(K_Z+E))=0$ 
for every $i>0$. 
By Lemma \ref{lem3.1}, 
$\rho_*\mathcal O_Z(K_Z+E)\simeq 
\mathcal O_X(K_X)$. 
Therefore, 
we have $H^i(X, \mathcal O_X(K_X+L))=0$ for 
every $i>0$.  
\end{proof}

The following key proposition for the proof of Theorem \ref{thm1.3} 
is a generalization of 
Lemma \ref{lem3.1}. 

\begin{prop}\label{prop3.4}
Let $(X, \Delta)$ be a quasi-projective 
semi-log-canonical 
pair such that 
the irreducible components 
of $X$ have no self-intersection in codimension one. 
Then there exist a birational quasi-log resolution 
$h:(Z, \Delta_Z)\to X$ from a globally embedded simple normal crossing 
pair $(Z, \Delta_Z)$ and a simple 
normal crossing divisor $E$ on $Z$ such that 
$h_*\mathcal O_Z(K_Z+E)\simeq \mathcal O_X(K_X)$. 
\end{prop}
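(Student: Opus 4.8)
The plan is to combine Theorem \ref{thm2.4} with Lemma \ref{lem3.1} and Remark \ref{rem3.2} applied componentwise on the normalization. By hypothesis the irreducible components of $X$ have no self-intersection in codimension one, so Theorem \ref{thm2.4} gives a \emph{birational} quasi-log resolution $h:(Z,\Delta_Z)\to X$ from a globally embedded simple normal crossing pair with $K_Z+\Delta_Z\sim_{\mathbb R}h^*(K_X+\Delta)$, $\Delta_Z=B|_Z$, and $h_*\mathcal O_Z(\lceil-\Delta_Z^{<1}\rceil)\simeq\mathcal O_X$. The idea is that, after adjusting the choice of $Z$ if necessary (using the explicit construction of $h$ recalled in Remark \ref{rem2.5} and \cite[Section 4]{fujino-slc}), $Z$ is obtained from a resolution of the normalization $X^\nu$ by gluing, so $Z$ decomposes into components each of which dominates a component of $X^\nu$, and on each such component $h$ restricts to a log resolution to which Lemma \ref{lem3.1} applies.

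First I would set up the decomposition. Let $\nu:X^\nu\to X$ be the normalization and write $X^\nu=\coprod_j X^\nu_j$ with $K_{X^\nu}+\Theta=\nu^*(K_X+\Delta)$, so that each $(X^\nu_j,\Theta_j)$ is log-canonical. The construction in \cite[Section 4]{fujino-slc} produces $Z$ together with a morphism $Z\to X^\nu$ that is, on the main components, a simultaneous resolution $\rho_j:Z_j\to X^\nu_j$, with $h|_{Z_j}=\nu\circ\rho_j$ birational onto $X^\nu_j$. On each $Z_j$, let $E_j$ be the sum of the $\rho_j$-exceptional prime divisors $E$ with $a(E,X^\nu_j,\Theta_j)=-1$, together with the strict transforms of the components of $\lceil\Theta_j\rceil$ appearing with coefficient one in $\Theta_j$; set $E=\sum_j E_j$, a reduced divisor on $Z$. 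One must check that $E$ is simple normal crossing on $Z$, that $E$ contains no stratum of $Z$ that maps into $\Sing X$, and that $K_Z+E$ is compatible with the gluing data — i.e.\ that $E$ is symmetric under the involution on the conductor — so that $K_Z+E$ descends to a well-defined sheaf computation on $X$.

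Next I would compute $h_*\mathcal O_Z(K_Z+E)$. Working on each $Z_j$ and applying Lemma \ref{lem3.1} with $X$ replaced by $X^\nu_j$, $\Delta$ by $\Theta_j$, and $S$ by the reduced part $\lfloor\Theta_j^{=1}\rfloor$ of $\Theta_j$ (the integral divisor with $0\le S\le\Theta_j$), together with Remark \ref{rem3.2} to restrict the exceptional contribution to the divisors of discrepancy $-1$, yields $\rho_{j*}\mathcal O_{Z_j}(K_{Z_j}+E_j)\simeq\mathcal O_{X^\nu_j}(K_{X^\nu_j}+S_j)$. Summing over $j$ gives $(\nu\circ\rho)_*\mathcal O_Z(K_Z+E)\simeq\nu_*\mathcal O_{X^\nu}(K_{X^\nu}+\lceil\Theta^{=1}\rceil)$ as sheaves on $X$. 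Finally one identifies $\nu_*\mathcal O_{X^\nu}(K_{X^\nu}+\lceil\Theta^{=1}\rceil)$ with $\mathcal O_X(K_X)$: this is the standard fact that for a semi-log-canonical pair the pushforward from the normalization of $K_{X^\nu}$ plus the "different = $1$" part of $\Theta$ recovers $\mathcal O_X(K_X)$, which follows from $S_2$-ness of $\mathcal O_X(K_X)$ and the local analysis along the conductor (this is implicit in the compatibility statement of Theorem \ref{thm2.4} and in \cite{fujino-slc}).

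The main obstacle I anticipate is the bookkeeping in the middle step: ensuring that the reduced divisor $E$ can be chosen \emph{simultaneously} on all components so as to be simple normal crossing \emph{and} gluing-compatible, and that the componentwise application of Lemma \ref{lem3.1} glues to a genuine isomorphism of sheaves on $X$ rather than merely on $X^\nu$. Concretely, one must verify that the strict transforms on $Z$ of the conductor divisors are accounted for correctly — they are exceptional for $h$ over $X$ even though they are not exceptional for $\rho_j$ over $X^\nu_j$ — so that no extra sections are introduced or lost when descending along $\nu$. Once the explicit structure of $h:Z\to X$ from \cite[Section 4]{fujino-slc} is in hand, this is a matter of careful tracking of divisors along the conductor, but it is where the real content lies.
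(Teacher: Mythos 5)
There are two genuine errors, one in your choice of $E$ and one in the final descent step. First, your $E$ includes the strict transforms of the coefficient-one components of $\lceil\Theta_j\rceil$, and these split into two kinds: the conductor components and the strict transform of $\lfloor\Delta\rfloor$. The latter are not $h$-exceptional, so $h_*\mathcal O_Z(K_Z+E)$ acquires genuine poles along $\lfloor\Delta\rfloor$: already for $X=\mathbb P^2$ with $\Delta$ a line $\ell$, so that $Z=X$ and $h=\mathrm{id}$, your recipe gives $E=\ell$ and $h_*\mathcal O_Z(K_Z+E)=\mathcal O_X(K_X+\ell)\ne\mathcal O_X(K_X)$. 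The former (the conductor components) correspond on $Z$ to $\Sing Z$, which is not a Cartier divisor on the simple normal crossing variety $Z$ and cannot be added to $K_Z$ there; they only make sense on the normalization $Z^\nu$. The correct choice is $E=\Exc(h)$ (or, as in Remark \ref{rem3.6}, only its discrepancy $-1$ part), arranged by a further birational modification to be a simple normal crossing divisor containing no irreducible component of $\Sing Z$ --- which is possible precisely because $\Sing Z$ maps birationally onto the closure of $\Sing X^{\mathrm{snc2}}$ and hence is not contained in the exceptional locus.

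Second, the closing identification $\nu_*\mathcal O_{X^\nu}(K_{X^\nu}+\lceil\Theta^{=1}\rceil)\simeq\mathcal O_X(K_X)$ is false, even after replacing $\Theta^{=1}$ by the conductor $\mathcal C_{X^\nu}$: for a nodal curve, $\omega_X$ is the \emph{proper} subsheaf of $\nu_*\omega_{X^\nu}(\mathcal C_{X^\nu})$ cut out by the residue (gluing) condition along the conductor. Only the inclusion $\mathcal O_X(K_X)\subset\nu_*\mathcal O_{X^\nu}(K_{X^\nu}+\mathcal C_{X^\nu})$ holds, and this asymmetry is exactly what the paper's proof exploits: the inclusion $h_*\mathcal O_Z(K_Z+E)\subset\mathcal O_X(K_X)$ is obtained directly on $X$ from the $S_2$ property of $\mathcal O_X(K_X)$ and the $h$-exceptionality of $E$, while only the reverse inclusion is routed through the normalizations, where Lemma \ref{lem3.1} gives $\mathcal O_{X^\nu}(K_{X^\nu}+\mathcal C_{X^\nu})=h^\nu_*\mathcal O_{Z^\nu}(K_{Z^\nu}+\mathcal C_{Z^\nu}+\nu_Z^*E)$, and one then pushes forward to $X$ and uses that $h$ is an isomorphism off $E$. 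This two-inclusion structure is what dispenses with the gluing compatibility that you correctly identify as the main obstacle but do not resolve; computing $h_*\mathcal O_Z(K_Z+E)$ as a direct sum over the components of $X^\nu$ cannot work, since $\nu_{Z*}$ of a sheaf pulled back to $Z^\nu$ strictly contains the corresponding sheaf on $Z$ along the double locus.
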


\begin{proof}
Since $X$ is quasi-projective and the irreducible components of $X$ have 
no self-intersection in codimension one, we can construct a birational 
quasi-log resolution $h:(Z, \Delta_Z)\to X$ by \cite[Theorem 1.2 and Remark 
1.5]{fujino-slc} (see Theorem \ref{thm2.4}), 
where $(Z, \Delta_Z)$ is a globally embedded simple normal 
crossing pair and the ambient space $M$ of $(Z, \Delta_Z)$ is a smooth 
quasi-projective 
variety. 
By the construction of 
$h:Z\to X$ 
in \cite[Section 4]{fujino-slc}, $\Sing Z$, the 
singular locus of $Z$, maps birationally onto 
the closure of $\Sing X^{\mathrm{snc2}}$, where $X^{\mathrm{snc2}}$ is the open 
subset of $X$ which has only smooth points and simple normal crossing 
points of multiplicity $\leq 2$. 
We put $E=\Exc(h)$. 
Note that $E$ contains no irreducible components 
of $\Sing Z$ by construction.  
By taking a blow-up of $Z$ along $E$ and a suitable birational 
modification (see \cite[Theorem 1.4]{bierstone-p}), 
we may assume that $E$ and $E\cup \Supp h^{-1}_*\Delta$ are 
simple normal crossing divisors on $Z$. 
In particular, $(Z, E)$ is a simple normal crossing 
pair (see Definition \ref{def2.6}). 
Note that \cite[Section 8]{fujino-pull} may help us understand how to 
make $(Z, \Delta_Z)$ a globally embedded simple normal crossing pair. 
We may assume that 
the support of $K_Z$ does not contain any irreducible components of $\Sing Z$ since 
$Z$ is quasi-projective. 
We may also assume that $h_*K_Z=K_X$. 
Then we have $h_*\mathcal O_Z(K_Z+E)\subset \mathcal O_X(K_X)$ since 
$\mathcal O_X(K_X)$ satisfies Serre's $S_2$ condition and 
$E$ is $h$-exceptional. 
We fix an embedding $\mathcal O_Z(K_Z+E)\subset \mathcal K_Z$, 
where $\mathcal K_Z$ is the sheaf of 
total quotient rings of $\mathcal O_Z$. 
Note that 
$h:Z\setminus E\to X\setminus h(E)$ is an isomorphism. 
We put $U=X\setminus h(E)$ and consider the natural open immersion 
$\iota:U\hookrightarrow X$. 
Then we have an embedding $\mathcal O_X(K_X)\subset 
\mathcal K_X$, where $\mathcal K_X$ is the sheaf of total quotient rings of 
$\mathcal O_X$, by $\mathcal O_X(K_X)=\iota_*\bigl ( h_* \mathcal O_Z(K_Z+E)|_U
\bigr)\subset \iota_*\mathcal K_U=\mathcal K_X\bigl(=h_*\mathcal K_Z\bigr)$. 
Let $\nu_X: X^\nu\to X$ be the normalization and let $\mathcal C_{X^\nu}$ be the 
divisor on $X^\nu$ defined by the conductor ideal $\mathfrak {cond}_X$ of 
$X$ (see, for example, \cite[Definition 2.1]{fujino-slc}). Then we have 
$\mathcal O_X(K_X)\subset (\nu_X)_*\mathcal O_{X^\nu}(K_{X^\nu}+\mathcal 
C_{X^\nu})$. 
We put $K_{X^\nu}+\Theta =\nu_X^*(K_X+\Delta)$. 
Then $0\leq \mathcal C_{X^{\nu}}\leq \Theta$  and $(X^\nu, \Theta)$ is log-canonical 
by definition. 
Let $\nu_Z: Z^\nu\to Z$ be the normalization. 
Thus we have $K_{Z^\nu}+\mathcal C_{Z^{\nu}}=\nu_Z^*K_Z$, 
where $\mathcal C_{Z^\nu}$ is the simple normal crossing divisor 
on $Z^\nu$ defined by the conductor ideal $\mathfrak{cond}_Z$ of $Z$. 
Now we have the following commutative diagram. 
$$
\xymatrix{
X^\nu \ar[d]_{\nu_X}& Z^\nu\ar[d]^{\nu_Z}\ar[l]_{h^\nu} \ar[dl]^\varphi\\ 
X & Z \ar[l]^h
}
$$ 
By Lemma \ref{lem3.1} and its proof, 
we see that $\mathcal O_{X^\nu}(K_{X^\nu}+\mathcal C_{X^\nu})
=h^\nu_*\mathcal 
O_{Z^\nu}(K_{Z^\nu}+\mathcal C_{Z^\nu}+\nu_Z^*E)$. 
Therefore, we obtain 
\begin{align*}
\mathcal O_X(K_X)&\subset \varphi_*\mathcal O_{Z^\nu}(K_{Z^\nu}+\mathcal C_{Z^\nu}
+\nu_Z^*E)\\ 
&=\varphi_*\mathcal O_{Z^\nu}(\nu_Z^*(K_Z+E)). 
\end{align*}
This implies that $\mathcal O_X(K_X)\subset h_*\mathcal O_Z(K_Z+E)$. 
Note that $h:Z\setminus E\to X\setminus h(E)$ is an 
isomorphism. 
Thus, we obtain $\mathcal O_X(K_X)=h_*\mathcal O_Z(K_Z+E)$ since 
$h_*\mathcal O_Z(K_Z+E)\subset \mathcal O_X(K_X)$. 
\end{proof}

\begin{rem}\label{rem3.5}
For the details of $\mathcal K_Z$ and $\mathcal K_X$, we recommend the 
reader to see the paper-back edition of \cite[Section 7.1]{liu} published in 2006 
(see also \cite{kleiman}). 
Note that the sheaf of total quotient rings is called 
the sheaf of stalks of meromorphic functions in \cite{liu}. 
\end{rem}

\begin{rem}\label{rem3.6}
As in Remark \ref{rem3.2}, in 
Proposition \ref{prop3.4}, 
we put 
$$
E'=\sum E_i
$$ 
where $E_i$'s are the $h$-exceptional divisors with 
the discrepancy coefficient 
$a(E_i, X, \Delta)\bigl(=a(E_i, X^\nu, \Theta) \bigr)=-1$. 
Then we have $h_*\mathcal O_Z(K_Z+E')\simeq \mathcal O_X(K_X)$ in 
Proposition \ref{prop3.4}. 
This easily follows from Remark \ref{rem3.2} and the proof of Proposition \ref{prop3.4}. 
\end{rem} 

For the proof of Theorem \ref{thm1.3}, 
we use the following vanishing theorem, which is 
obviously a generalization of Theorem \ref{thm3.3}. 
For the proof, see \cite[Theorem 1.1]{fujino-vanishing} 
(see also \cite[Chapter 5]{fujino-foundation}). 

\begin{thm}[{\cite{fujino-unpublished}, 
\cite[Theorem 1.1]{fujino-vanishing}, \cite{fujino-foundation}, and so on}]\label{thm3.7} 
Let $(Z, C)$ be a simple normal crossing pair such that 
$C$ is a boundary $\mathbb R$-divisor on $Z$. 
Let $h: Z\to X$ be a proper morphism to a variety $X$ 
and let $f:X\to Y$ be a projective morphism 
to a variety $Y$. 
Let $D$ be a Cartier divisor on $Z$ such that 
$D-(K_Z+C)\sim _{\mathbb R}f^*H$ for some 
ample $\mathbb R$-divisor $H$ on $X$. 
Then we have $R^if_*R^jh_*\mathcal O_Z(D)=0$ for every $i>0$ and $j\geq 0$. 
\end{thm}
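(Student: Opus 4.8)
This is a relative Kollár-type vanishing theorem for simple normal crossing pairs, and I would prove it by the familiar three-step route: first an injectivity theorem, then torsion-freeness of the direct images, then the vanishing itself, with the Hodge-theoretic input carrying all the weight. (In the statement, ``$f^*H$'' is to be read as the pullback of $H$ from $X$ to $Z$, i.e.\ $h^*H$.) The first step is to reduce the relative statement to an absolute one. The assertion is local on $Y$, and $Y$ is quasi-projective, so after compactifying $f$ and $h$ and shrinking I may assume $Y$ projective; fixing an ample divisor $A$ on $Y$, the vanishing $R^if_*R^jh_*\mathcal O_Z(D)=0$ for all $i>0$ and $j\geq0$ then follows, by a standard spectral-sequence argument, from two facts in the projective case: the absolute vanishing $H^p\bigl(Z,\mathcal O_Z(D)\otimes(f\circ h)^*\mathcal O_Y(mA)\bigr)=0$ for $p>0$ and $m\gg0$, and the torsion-freeness of each $R^jh_*\mathcal O_Z(D)$ (no associated prime contained in a proper closed subset of $X$). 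So it is enough to treat the projective case.

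The heart of the matter is an injectivity theorem: for a projective simple normal crossing pair $(Z,C)$ with $C$ a boundary $\mathbb R$-divisor, a Cartier divisor $D$ with $D\sim_{\mathbb R}K_Z+C+M$ for a semi-ample $\mathbb R$-divisor $M$, and an effective Cartier divisor $B$ whose support is compatible with $M$ and in simple normal crossing position with $C$, the natural map $H^i\bigl(Z,\mathcal O_Z(D)\bigr)\to H^i\bigl(Z,\mathcal O_Z(D+B)\bigr)$ is injective for every $i$. I would prove this by passing to a cyclic cover of $Z$ adapted to $B$ and to a $\mathbb Q$-approximation of $M$, which reduces the statement to the strictness of the Hodge filtration on the cohomology \emph{with compact support} of an open stratum of $Z$; that strictness is the $E_1$-degeneration of the Hodge-to-de Rham spectral sequence, and is an instance of the mixed Hodge structure on $H^*_c$ of a quasi-projective simple normal crossing variety. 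For \emph{reducible} $Z$ this requires extending Deligne's theory of mixed Hodge structures to simple normal crossing varieties, as carried out in \cite{fujino-vanishing}, \cite{fujino-injectivity}, \cite{fujino-foundation}.

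From the injectivity theorem, Theorem~\ref{thm3.7} follows by the Koll\'ar--Esnault--Viehweg bootstrap. Since $D-(K_Z+C)\sim_{\mathbb R}h^*H$ with $H$ ample on $X$, for $m\gg0$ one can choose an effective divisor $B$ supported on a general member of a linear system pulled back from $X$ (after adding a suitable multiple of $(f\circ h)^*A$ to gain positivity over $Y$) and in simple normal crossing position with $C$; feeding the short exact sequence $0\to\mathcal O_Z(D)\to\mathcal O_Z(D+B)\to\mathcal O_B(D+B)\to0$ into the injectivity theorem, pushing forward, and iterating (with an induction on the stratification of $(Z,C)$), one kills the higher direct images $R^if_*R^jh_*\mathcal O_Z(D)$ for $i>0$ and, along the way, obtains the torsion-freeness invoked in the reduction step.

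I expect the injectivity theorem of the second step --- and inside it the mixed Hodge theory of the cohomology with compact support of \emph{reducible} simple normal crossing varieties --- to be the only genuine obstacle; this is precisely where, as stressed in Remark~\ref{rem1.6}, the theory of mixed Hodge structures for cohomology with compact support does the real work. The reduction in the first step and the bootstrap in the third are formal, being routine manipulations with spectral sequences, cyclic covers, Serre vanishing, and general hyperplane sections.
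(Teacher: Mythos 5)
The paper does not prove Theorem~\ref{thm3.7} at all: it is quoted with references to \cite{fujino-unpublished}, \cite[Theorem 1.1]{fujino-vanishing} and \cite{fujino-foundation}, and the text explicitly directs the reader there for the proof. So the comparison is with those references, and your three-step architecture --- an injectivity theorem coming from the $E_1$-degeneration of the Hodge-to-de Rham spectral sequence for the mixed Hodge structure on compactly supported cohomology of (reducible) simple normal crossing varieties, then torsion-freeness of the $R^jh_*$, then the Esnault--Viehweg/Koll\'ar bootstrap with general members of linear systems pulled back from $X$ --- is exactly their strategy. You also correctly identify the mixed-Hodge-theoretic input as the only non-formal ingredient, and correctly read $f^*H$ as $h^*H$.

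One step of your reduction is misstated, however. The claimed ``absolute vanishing'' $H^p\bigl(Z,\mathcal O_Z(D)\otimes(f\circ h)^*\mathcal O_Y(mA)\bigr)=0$ for all $p>0$ is false in general: whenever $h$ has positive-dimensional fibers, the Leray spectral sequence for $f\circ h$ contributes a summand $H^0\bigl(Y, f_*R^jh_*\mathcal O_Z(D)\otimes \mathcal O_Y(mA)\bigr)$ to $H^j(Z,\dots)$, which is nonzero for $m\gg 0$ as soon as $R^jh_*\mathcal O_Z(D)\ne 0$ (take, say, $Y=X=\mathbb P^1$, $f=\mathrm{id}$, $h:Z\to X$ a surface fibration, $D=K_Z+h^*H$, $j=1$). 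Torsion-freeness of $R^jh_*\mathcal O_Z(D)$ does not repair this, since vanishing of $R^k(f\circ h)_*$ would anyway not force vanishing of the individual $R^if_*R^jh_*$. The statement that actually relativizes is the $Y=\{\mathrm{pt}\}$ case of the theorem itself with $h$ still arbitrary, namely $H^p\bigl(X, R^jh_*\mathcal O_Z(D)\otimes\mathcal A\bigr)=0$ for $p>0$ and $\mathcal A$ ample on $X$; one applies this with $\mathcal A=\mathcal O_X(H)\otimes f^*\mathcal O_Y(mA)$ and then uses the Leray spectral sequence for $f$ together with Serre vanishing and global generation on $Y$ to kill $R^if_*R^jh_*\mathcal O_Z(D)$ for $i>0$. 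With that correction your outline agrees with the cited proofs.
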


Let us start the proof of Theorem \ref{thm1.3}. 

\begin{proof}[Proof of Theorem \ref{thm1.3}]
We take a natural finite double cover $p:\widetilde X\to X$ due to 
Koll\'ar (see \cite[Lemma 5.1]{fujino-slc}), which is \'etale in codimension one. 
Since $K_{\widetilde X}+\widetilde \Delta=p^*(K_X+\Delta)$ is semi-log-canonical and 
$\mathcal O_X(K_X)$ is a direct summand of 
$p_*\mathcal O_{\widetilde X}(K_{\widetilde 
X})$, we may assume that 
the irreducible components of $X$ have no self-intersection in codimension one 
by replacing $(X, \Delta)$ with $(\widetilde X, \widetilde \Delta)$. 
By Proposition \ref{prop3.4}, 
we can take a birational 
quasi-log resolution $h:(Z, \Delta_Z)\to X$ from 
a globally embedded simple normal crossing pair $(Z, \Delta_Z)$ such that 
there exists a simple normal crossing divisor $E$ on $Z$ satisfying 
$h_*\mathcal O_Z(K_Z+E)\simeq \mathcal O_X(K_X)$. 
Note that $K_Z+E+h^*L-(K_Z+E)=h^*L$. 
Therefore, 
we obtain that 
\begin{align*}
&R^if_*\mathcal O_X(K_X+L) \\ &\simeq 
R^if_*\left(h_*\mathcal O_Z(K_Z+E)\otimes \mathcal O_X(L)\right)=0
\end{align*} 
for every $i>0$ by Theorem \ref{thm3.7}. 
\end{proof}

\begin{rem}\label{rem3.8} 
If $\Delta=0$ in Theorem \ref{thm1.3}, 
then Theorem \ref{thm1.3} follows from \cite[Theorem 1.7]{fujino-slc}. 
Note that the formulation of \cite[Theorem 1.7]{fujino-slc} seems to 
be more useful for some applications than the formulation of Theorem 
\ref{thm1.3}. 

Let $(X, \Delta)$ be a semi-log-canonical Fano variety, that is, 
$(X, \Delta)$ is a projective semi-log-canonical 
pair such that $-(K_X+\Delta)$ is ample (see \cite[Section 6]{fujino-pull}). 
Then $H^i(X, \mathcal O_X)=0$ for 
every $i>0$ by \cite[Theorem 1.7]{fujino-slc}. 
Unfortunately, this vanishing result for 
semi-log-canonical Fano varieties does not 
follow from Theorem \ref{thm1.1}. 
See also Remark \ref{rem3.10} below. 
\end{rem}

Let us prove Theorem \ref{thm1.1} and Theorem \ref{thm1.2}. 

\begin{proof}[Proof of Theorem \ref{thm1.1}]
Theorem \ref{thm1.1} is a special case of Theorem \ref{thm1.3}. 
By putting $Y=\Spec \mathbb C$ in Theorem \ref{thm1.3}, 
we obtain Theorem \ref{thm1.1}.  
\end{proof}

\begin{proof}[Proof of Theorem \ref{thm1.2}]
If $(X, \Delta)$ is log-canonical, then $(X, \Delta)$ is semi-log-canonical. 
Therefore, Theorem \ref{thm1.2} is 
contained in Theorem \ref{thm1.1}. 
\end{proof}

As a direct easy application of Theorem \ref{thm1.1}, we have: 

\begin{cor}\label{cor3.9} 
Let $X$ be a stable variety, that is, $X$ is a projective semi-log-canonical 
variety such that $K_X$ is ample. 
Then $$H^i(X, \mathcal O_X((1+ma)K_X))=0$$ for 
every $i>0$ and every positive integer $m$, 
where $a$ is a positive integer such that $aK_X$ is Cartier. 
\end{cor}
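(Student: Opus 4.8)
The plan is to obtain Corollary~\ref{cor3.9} as an immediate consequence of Theorem~\ref{thm1.1}, applied with $\Delta=0$ and a suitable ample Cartier divisor $L$. First I would note that, since $X$ is a stable variety, the pair $(X,0)$ is a projective semi-log-canonical pair in the sense of Definition~\ref{def2.2}; in particular $X$ satisfies Serre's $S_2$ condition and $K_X$ is $\mathbb Q$-Cartier. Fix a positive integer $a$ with $aK_X$ Cartier. Since $K_X$ is ample, for every positive integer $m$ the divisor $L:=maK_X$ is an ample Cartier divisor on $X$.

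Next I would apply Theorem~\ref{thm1.1} to $(X,0)$ and this $L$, which yields $H^i\bigl(X,\mathcal O_X(K_X+maK_X)\bigr)=0$ for every $i>0$. It then remains to identify this divisorial sheaf with $\mathcal O_X\bigl((1+ma)K_X\bigr)$: because $maK_X$ is Cartier and $X$ is $S_2$, the canonical morphism $\mathcal O_X(K_X)\otimes \mathcal O_X(maK_X)\to \mathcal O_X\bigl((1+ma)K_X\bigr)$ is an isomorphism, and its source is exactly $\mathcal O_X(K_X+maK_X)$. Combining the two steps gives $H^i\bigl(X,\mathcal O_X((1+ma)K_X)\bigr)=0$ for all $i>0$ and all positive integers $m$, which is the assertion.

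There is no genuine obstacle here: the corollary is a formal specialization of Theorem~\ref{thm1.1}. The only points needing (routine) justification are that ``$X$ is a semi-log-canonical variety'' means precisely that $(X,0)$ is a semi-log-canonical pair, that $maK_X$ is Cartier and ample, and the standard fact that tensoring a divisorial sheaf by a line bundle on an $S_2$ scheme computes the divisorial sheaf of the sum; none of these require any new ideas.
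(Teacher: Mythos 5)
Your proposal is correct and is exactly the argument the paper intends: the paper presents Corollary~\ref{cor3.9} as a direct application of Theorem~\ref{thm1.1} with $\Delta=0$ and $L=maK_X$, which is an ample Cartier divisor since $K_X$ is ample and $aK_X$ is Cartier. The routine identification of $\mathcal O_X(K_X+maK_X)$ with $\mathcal O_X((1+ma)K_X)$ that you spell out is the only bookkeeping needed, and you handle it correctly.
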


\begin{rem}\label{rem3.10} 
Let $X$ be a stable variety as in Corollary \ref{cor3.9}. 
By \cite[Corollary 1.9]{fujino-slc}, 
we have already known that $H^i(X, \mathcal O_X(mK_X))=0$ for 
every $i>0$ and every positive integer $m\geq 2$. 
This is an easy consequence of \cite[Theorem 1.7]{fujino-slc}. 
\end{rem}

Finally, we prove Corollary \ref{cor1.4}. 

\begin{proof}[Proof of Corollary \ref{cor1.4}]
Since $X$ is Cohen--Macaulay, we see that 
the vector space $H^i(X, \mathcal O_X(-L))$ is 
dual to 
$H^{\dim X-i}(X, \mathcal O_X(K_X+L))$ by Serre duality. 
Therefore, we have $H^i(X, \mathcal O_X(-L))=0$ for 
every $i<\dim X$ by Theorem \ref{thm1.1}. 
\end{proof}

\begin{rem}\label{rem3.11} 
The approach to the Kodaira vanishing theorem 
explained in \cite[Section 6]{kss} can not be directly 
applied to non-Cohen--Macaulay varieties. 
The above proof of Corollary \ref{cor1.4} is different from 
the strategy in \cite[Section 6]{kss}. 
\end{rem}

\section{Proof of Theorem \ref{thm1.7}}\label{sec4} 

In this final section, we just explain how to modify the proof of 
Theorem \ref{thm1.3} in order to obtain Theorem \ref{thm1.7}. 
We do not explain a generalization of Theorem \ref{thm3.7} for 
nef and log big divisors (see \cite[Theorem 5.7.3]{fujino-foundation}), 
which is a main ingredient of the proof of Theorem \ref{thm1.7} 
below. 

Let us start the proof of Theorem \ref{thm1.7}. 

\begin{proof}[Proof of Theorem \ref{thm1.7}] 
Let $p:\widetilde X\to X$ be 
a natural finite double cover as in the proof of Theorem \ref{thm1.3}. 
Note that $p^*L$ is nef and log big over $Y$ with 
respect to $(\widetilde X, \widetilde \Delta)$. 
Therefore, we may assume that the irreducible components of $X$ have 
no self-intersection in codimension one by replacing $(X, \Delta)$ with 
$(\widetilde X, \widetilde \Delta)$. 
We take a birational quasi-log resolution $h:(Z, \Delta_Z)\to X$ as 
in Proposition \ref{prop3.4}. Let $E'$ be the divisor defined in Remark \ref{rem3.5}. 
In this case, $L$ is nef and log big over $Y$ with respect to 
$h:(Z, E')\to X$ (see \cite[Definition 5.7.1]{fujino-foundation}). 
Then we obtain that 
\begin{align*}
&R^if_*\mathcal O_X(K_X+L) \\ &\simeq 
R^if_*\left(h_*\mathcal O_Z(K_Z+E')\otimes \mathcal O_X(L)\right)=0
\end{align*} 
for every $i>0$ by \cite[Theorem 5.7.3]{fujino-foundation} 
(see also \cite[Theorem 2.47 (ii)]{fujino-unpublished} and 
\cite[Theorem 6.3 (ii)]{fujino-fujisawa}). Note that 
$K_Z+E'+h^*L-(K_Z+E')=h^*L$ and that the $h$-image of 
any stratum of $(Z, E')$ is an slc stratum of $(X, \Delta)$ by construction 
(see Definition \ref{def2.2}). 
\end{proof}

\begin{rem}\label{rem4.1}
For the details of 
the vanishing theorem for 
nef and log big divisors and some related topics, 
see \cite[5.7.~Vanishing theorems of Reid--Fukuda type]
{fujino-foundation}. 
Note that \cite{fujino-foundation} is a completely revised and expanded version of 
the author's unpublished manuscript \cite{fujino-unpublished}. 
\end{rem}

\begin{rem}\label{rem4.2} 
We strongly recommend the reader to see Theorem 1.10, 
Theorem 1.11, and Theorem 1.12 in \cite{fujino-slc}. 
They are useful and powerful vanishing theorems 
for semi-log-canonical pairs related to Theorem \ref{thm1.7}. 
\end{rem}

\end{document}